\def\BibTeX{{\rm B\kern-.05em{\sci\kern-.025em b}\kern-.08emT\kern-.1667em\lower.7ex\hbox{E}\kern-.125emX}}
\newcommand{\removelatexerror}{\let\@latex@error\@gobble}
\pgfplotsset{compat=1.16}
\algrenewcommand\algorithmicindent{1em}
\newcommand{\mc}{\mathcal}
\newcommand{\ul}{\underline}
\newcommand{\ol}{\overline}
\newcommand{\defineas}{\coloneqq}
\newcommand{\D}{\mathcal{D}}
\renewcommand{\S}{\mathcal{S}}
\newcommand{\J}{\mathcal{J}}
\newcommand{\hJ}{\hat{\mathcal{J}}}
\newcommand{\T}{\mathcal{T}}
\newcommand{\plotjourney}[7]{
\draw[line width=1pt, fill=mycolor#2] (axis cs:#4,#3+0.1) rectangle (axis cs:#6,#3-0.1);

\draw[fill=mycolor#2!40] (axis cs:#6,#3+0.1) rectangle (axis cs:#5,#3-0.1);

\draw[line width=1pt] (axis cs:#6, #3+0.2) -- (axis cs:#6, #3-0.2);

\node[text width=0.1cm] at (axis cs:#4, #3+0.3){\footnotesize #1};

\node[diamond,draw,inner sep=1pt,fill=mycolor#2](n7) at (axis cs:#7, #3-0.5) {};

\draw (axis cs:#6, #3-0.2) -- (n7);
}
\DeclareMathOperator*{\argmax}{arg\,max}
\DeclareMathOperator*{\argmin}{arg\,min}
\newtheorem{definition}{Definition}
\newtheorem{lemma}{Lemma}
\newtheorem{thm}{Theorem}
\newtheorem{example}{Example}
\definecolor{mycolor1}{RGB}{230,97,1}
\definecolor{mycolor2}{RGB}{153,204,0}
\definecolor{mycolor3}{RGB}{253,184,99}
\definecolor{mycolor4}{RGB}{153,204,255}
\begin{document}

\title{\LARGE \bf Capacity-Constrained Urban Air Mobility Scheduling
}

\author{Qinshuang Wei, Gustav Nilsson, and Samuel Coogan
\thanks{%
The authors are with the School of Electrical and Computer Engineering, Georgia Institute of Technology, Atlanta, 30332, USA. {\tt\small \{qinshuang, gustav.nilsson, sam.coogan\}@gatech.edu}. S.~ Coogan is also with the School of Civil and Environmental Engineering, Georgia Institute of Technology. This work was supported in part by the National Science Foundation under grant \#1749357 and the Air Force Office of Scientific Research under award FA9550-19-1-0015.}}

\maketitle

\begin{abstract}

This paper studies the problem of scheduling urban air mobility trips when travel times are uncertain and capacity at destinations is limited. Urban air mobility, in which air transportation is used for relatively short trips within a city or region, is emerging as a possible component in future transportation networks. Destinations in urban air mobility networks, called vertiports or vertistops, typically have limited landing capacity, and, for safety, it must be guaranteed that an air vehicle will be able to land before it can be allowed to take off. We first present a tractable model of urban air mobility networks that accounts for limited landing capacity and uncertain travel times between destinations  with lower and upper travel time bounds. We then establish theoretical bounds on the achievable throughput of the network. Next, we present a tractable algorithm for scheduling trips to satisfy safety constraints and arrival deadlines. The algorithm allows for dynamically updating the schedule to accommodate, e.g., new demands over time.  The paper concludes with case studies that demonstrate the algorithm on two networks.

\end{abstract}

\section{Introduction}

There is growing interest in utilizing urban airspace for transportation of people and goods. Both commercial mobility-on-demand operators~\cite{2016uber} and government-sponsored research institutes such as NASA~\cite{thipphavong2018urban} are exploring such urban air mobility (UAM) solutions in cities and surrounding regions. Studies such as~\cite{balakrishnan2018blueprint, 2014NextGen,2018landscape, inrix, al2018identifying, ancel2017real} propose various approaches to allow urban air vehicles (UAVs) to travel safely and efficiently through cities. These proposed ideas cover a wide range of possibilities such as allowing UAVs to land at \emph{vertistops} or \emph{vertiports} installed on roofs of existing buildings or within cloverleaf exchanges on freeways. Several simulation tools~\cite{bosson2018simulation, xue2018fe3, 2019spark} have also been developed.

In this paper, we study a dynamic scheduling algorithm for UAM networks that accounts for uncertainty in travel time and limited landing capacity.
Trip demands, \emph{i.e.}, flights, must travel through designated routes and have arrival deadlines at their destinations. We then consider the problem of scheduling flight departures to ensure that all flights arrive no later than their deadlines and that there is always an available landing spot at the destination and intermediate nodes upon arrival. 
The main contributions are as follows: First, we present a model for UAM networks that allows for a time-varying set of trip demands, limited landing capacity at destinations, and uncertainty in travel times that is modeled nondeterministically with lower and upper travel time bounds. Because demands are time-varying, we refer to such networks as \emph{dynamic}; in the case when all demands are available at once, we call the network \emph{static} and refer to a \emph{static} scheduling problem.
Second, we present necessary conditions for the existence of a schedule for the dynamic UAM network. For the case of a static UAM network with a star-graph topology, we  show that this necessary condition is also sufficient. 
Third, we present a computationally efficient scheduling algorithm to compute a schedule satisfying safety constraints and arrival deadlines, and we demonstrate our approach on several case studies. 

This paper extends our earlier work \cite{ACC2021}, which considered the static scheduling problem for a limited class of star-like networks. In \cite{ACC2021}, we proposed evaluating the cost of a schedule as the sum of the difference between arrival and departure time for all trips, and we showed that an optimal schedule can be obtained from a mixed integer program. This approach extends to the more general scheduling problem in this paper, however, the mixed integer program quickly becomes intractable as the number of flights increases. Instead, the algorithm proposed in this paper uses a branch-and-bound heuristic that does not rely on a mixed integer formulation and therefore may only create a suboptimal schedule. However, we demonstrate through example that our algorithm is able to quickly obtain feasible schedules with reasonable cost, i.e., in under 1 second for 200 trips in two case study networks. Moreover, the algorithm computation can be continued in search of a lower cost schedule and interrupted at any point.

In the transportation scheduling literature, prior works have considered uncertain travel times or limitations on parking capacity separately. Particularly,~\cite{bulusu2018throughput} investigates how the flow of UAVs depends on the congestion level and finds through simulations similarities with ground highway traffic with high traffic density. 
The paper~\cite{sun2018dynamic} proposes an approach for traffic scheduling that can dynamically schedule flows on the link based on real-time link information.

Uncertainty in network routing problems has also been studied before for ground transportation. The paper~\cite{gendreau1996stochastic} provides a literature review of such problems, and examples of more recent work are presented in~\cite{dean2004algorithms}, which studies computation of minimum-cost paths through a time-varying network and considers several classes of waiting policies. In~\cite{samaranayake2011tractable}, a theoretical basis for optimal routing in transportation networks with highly varying traffic conditions is provided, where the goal is to maximize the probability of arriving on time at a destination given a departure time and a time budget. 

Limited parking availability has been considered in truck scheduling, where the drivers are usually required by law to park and rest after a specified amount of driving time. 
For example, in~\cite{ioannou2018optimizing}, it is assumed that 
truck drivers only have access to parking spots during specific time windows, but space limitations are not considered. The paper~\cite{kok2010dynamic} considers a similar problem with time-dependent travel times but does not take the availability of parking spots into account.

Scheduling problems have also been well-studied in the real-time systems community, e.g., in~\cite{albers2005efficient, fisher2007global}, where jobs often are assumed to arrive with a fixed periodicity and in some models have an uncertainty in their processing time. Our fundamental limits are similar in nature and compatible with those found in these works, but our results are tailored to applications in UAM networks. For example, for finite demands, we consider scheduling to achieve prescribed deadlines without excessively early departure times.

The remainder of this paper is organized as follows: In Section~\ref{sec:problem_formulation}, we present the dynamic UAM network model. In Section~\ref{sec:dynamic_sche_alg}, we 
establish necessary conditions for the existence of a feasible schedule. We also show that in certain star-like networks, the necessary condition becomes a sufficient condition. We  present an efficient algorithm for obtaining feasible schedules in  Section~\ref{sec:algorithm}, while some technical details about the algorithm are presented in the Appendix.
We then demonstrate in Section~\ref{sec:numerical} the proposed  algorithm on two case studies with up to 200 trips. In both networks, we are able to compute a reasonable schedule within 1 second.

\subsection{Notation}

We let $\mathbb{N}$ denote the natural numbers without zero while $\mathbb{N}_0$ the natural numbers with zero, and $\mathbb{R}$ the reals while $\mathbb{R}_+$ the positive reals. For a finite set $\mc A$, we let $\mathbb{R}^{\mc A}$, denote the set of vectors indexed by the elements in $\mc A$.

\section{Problem Formulation}
\label{sec:problem_formulation}
We model an urban air mobility (UAM) network with an acyclic\footnote{In practice, a directed graph can often be naturally decomposed into two or more acyclic graphs, e.g., flights inbound versus outbound of a city center. 
} directed graph $\mc G = (\mc V,\mc E)$, where $\mc V$ is the set of nodes and $\mc E$ is the set of links for the network. Nodes are physical landing sites for the UAVs, sometimes called \emph{vertistops} or \emph{vertiports}. Links are corridors of airspace connecting nodes. Each node $v \in \mc V$ has capacity $C_v \in \mathbb{N}_0$, that is, there are $C_v$ \emph{parking spots} at node $v$ where each parking spot allows at most one UAV to stay at any time. We denote the vector of capacities $C=\{C_v\}_{v\in\mathcal{V}}$.

We define $\tau: \mc E \rightarrow \mc V$ and $\sigma: \mc E \rightarrow \mc V$ so that for all $e=(v_1, v_2) \in \mc E$ where $v_1,v_2 \in \mc V$, $\tau(e) = v_1$ is the tail of edge $e$ and $\sigma(e) = v_2$ is the head of edge $e$. Let $S\subseteq \mathcal{V}$ (resp., $T\subseteq \mathcal{V}$) be the set of nodes that are not the head (resp., tail) of any edge, $S=\{v\in\mathcal{V}\mid \sigma(e)\neq v\ \forall e\in\mathcal{E}\}$ and $T=\{v\in\mathcal{V}\mid \tau(e)\neq v\ \forall e\in\mathcal{E}\}$. We assume $S\cap T=\emptyset$.

A \emph{route} $R$ is a sequence of connected links. Denote the number of links in route $R$ by $k_R$ and enumerate the links in the route  $1^R,2^R,\ldots,k_R^R$ and the nodes in the route $0^R,1^R,\ldots,k^R_R$. To avoid cumbersome notation, we use $\ell^R$ to denote both a link and its head node along a route, i.e., $\ell^R=\sigma(\ell^R)$ for all $\ell\in\{1,\dots,k_R\}$; the intended meaning will always be clear from context. Thus the route links and nodes are enumerated so that $0^R=\tau(1^R)$ is the origin node, $k_R^R$ is the destination node, and $\sigma(\ell^R) = \tau((\ell+1)^R)$ for all $\ell\in\{1,\dots,k_R\}$ ensures the sequence is connected. Further, when the route $R$ is clear from context, we drop the superscript-$R$ notation. We denote the set of nodes that $R$ travels through as $V(R)$.

We assume that, due to operational reasons, the UAVs are only allowed to travel along a set of routes $\mc R$.

Since, in reality, the travel time depends on external factors such as weather conditions, we assume that the travel time for each link is not exact, but rather bounded by a time interval. 
For each link $i \in \mc E$, let $\ol{x}_i$ and $\ul{x}_i$ with $\ol{x}_i \geq \ul{x}_i>0$ denote the maximum travel time and minimum travel time, respectively, for the link, and let $\ul{x}\in \mathbb{R}_+^\mathcal{E}$ and $\ol{x}\in \mathbb{R}_+^\mathcal{E}$ be the corresponding aggregated vectors. Once a UAV has landed at any node, it is assumed to block a landing spot for a fixed ground service time $w \in \mathbb{R}_+$. For ease of notation, we assume the ground service time is uniform at all nodes, but this assumption is straightforward to relax. 

\begin{definition}[UAM Network]
A UAM network $\mc N$ is a tuple $\mc N = (\mc G, C, \mc R, \ul{x}, \ol{x},w)$ where $\mc G, C,\mc R, \ul{x}, \ol{x}, w$ are the network graph, node capacities, routes, and minimum and maximum link travel times as defined above.
\end{definition}

To model the demand of UAV flights in a UAM network $\mc N = (\mc G, C, \mc R, \ul{x}, \ol{x},w)$, we assume that every flight is associated to a route $R\in \mc R$ and stops at intermediate nodes along the route. Therefore, a \emph{demand} is a pair $(R, f)$ where $R \in \mc R$ and $f \in \mathbb{R}_+$ is the latest time the UAV must arrive (i.e., land) at the destination $k^R_R\in\mathcal{V}$, \emph{i.e.}, its deadline.

A demand profile for a UAM network is a time-varying set $\mc D(t)=\{(R_j,f_j)\}_{j\in {\mc J(t)}}$ where ${\mc J(t)}$ is a time-varying index set. Then $\mc J(t)$, and, hence, $\mc D(t)$, is monotonically increasing and is assumed finite for all finite $t$, but may become infinite in the limit.  To coordinate the operation of the UAVs, a centralized scheduler aims to schedule all available demands $\mc D(t)$ at each time $t$. For example, new demands may become available in batches at certain times, requiring action from the scheduler, and other demands will reach their final destination, requiring no further consideration from the scheduler, although without loss of generality completed demands remain within the set $\mc D(t)$. Let $\mc D:=\cup _{t}\mc D(t)=\lim_{t\to \infty} \mc D(t)$ denote all demands that will ever be considered by the scheduler, and likewise let $\mc J:=\cup_{t}\mc J(t)=\lim_{t\to\infty} \mc J(t)$.

The scheduler associates to each demand a journey consisting of an assigned departure time and the set of realized arrival times along edges in the route. Therefore, a journey is updated over time as the UAV arrives at intermediate notes. Formally, for each $j\in {\mc J}(t)$, a \emph{journey} for demand $(R_j,f_j)\in {\mc D}(t)$  is a tuple $(A_j(t), \delta_{j})$, where $\delta_{j} \in \mathbb{R}$ is the departure time and is a decision variable of the scheduler, and $A_j(t)$ is the realized set of arrival times of the UAV at nodes along $R_j$ that have occurred by time $t$. Here, $\delta_j$ is a decision variable of the scheduler and $A_j$ is a result of realized travel times. In particular, when $R_j = \{\ell\}_{\ell=1}^{k_{R_j}}$,  $A_j(t) = \{A_{j,\ell}(t)\}_{\ell=1}^{k_{R_j}}$ where $A_{j,\ell}(t)$ is the arrival time at node ${\ell}$ if the arrival has occured by time $t$, and $A_{j,\ell}(t)=\infty$ if demand $j$ has not arrived at node $\ell$ by time $t$. 
Therefore, the departure time of the UAV from node $\ell$ is $\delta_{j,\ell} = A_{j,\ell} + w$ where we drop the explicit dependence on time since the equation is understood to hold only for times $t$ such that $A_{j,\ell}(t)<\infty$ and we let $\delta_{j,0} = \delta_j$.

For safety reasons, it is assumed that a UAV must be able to land immediately upon arrival at any node along its route. Whenever a UAV arrives at a node along the route of a journey, the journey will be updated accordingly, so that the uncertainty of the rest of the journey decreases. In particular, suppose the UAV departs some node $\ell_1-1$ along its route. The latest arrival time at some other node $\ell_2 \geq \ell_1$ along the route is denoted $a^{j}_{\ell_1,\ell_2}$ and given by
\begin{equation}
\label{eq:a_aggregate}
    a^{j}_{\ell_1,\ell_2} = \delta_{j,{\ell_1-1}} + \sum_{\ell=\ell_1}^{\ell_2} \ol{x}_{\ell}  + (\ell_2-\ell_1)w\,,
\end{equation}
i.e., $a^{j}_{\ell_1,\ell_2}$ is the departure time from node $\ell_1-1$ plus the upper bound of the time interval it takes to travel through the links $\{\ell\}_{\ell=\ell_1}^{\ell_2}$ with the time spent at each node. Note that it is time-dependent because $\delta_{j,{\ell_1-1}}$ is updated over time.
Further, the time interval that the UAV will potentially block a landing spot at node ${\ell_2}$ when departing from $\ell_1$ is given by
\begin{equation}
\label{eq:M_span}
\mc M_{\ell_1,\ell_2}^j = \left[ \delta_{j,{\ell_1-1}} + \sum_{\ell=\ell_1}^{\ell_2} \ul{x}_{\ell}+ (\ell_2-\ell_1)w, \, a^{j}_{\ell_1,\ell_2} + w \right] .
\end{equation}
We also define $m^{R_j}_{\ell_1,\ell_2}$ as the length of the time interval above, so that 
\begin{equation}
\label{eq:m_span}
    m^{R_j}_{\ell_1,\ell_2} = \sum_{\ell=\ell_1}^{\ell_2} \ol{x}_{\ell} -  \sum_{\ell=\ell_1}^{\ell_2} \ul{x}_{\ell} + w \,.
\end{equation}
Note that the superscript of $m$ is $R_j$ because it depends only on the route of demand $j$ and not its particular arrival and departure times.
We let $\mc M_{v_1,v_2}^j = \mc M_{\ell_1,\ell_2}^j$ if $v_1,v_2$ are two nodes along the route $R_j \in \mc R$, $v_1 = \ell_1^{R_j}$, $v_2 = \ell_2^{R_j}$ and $\ell_1 \leq \ell_2$. For all $\ell\in\{1,\ldots,k_R\}$, we let $a^{j}_{\ell} = a^{j}_{1,\ell}$, $\mc M_{\ell}^j = \mc M_{1,\ell}^j$ and  $m^{R_j}_{\ell} = m^{R_j}_{1,\ell}$. In the same manner, we let $a^j_v = a^{j}_{\ell}$, $\mc M_v^j = \mc M_\ell^j$ and $m^{R_j}_{v}=m^{R_j}_{\ell}$ if $v = \ell^{R_j}$.

The task, then, is to assign to each demand a journey such that capacity constraints and arrival times are satisfied. 

\begin{definition}[Schedules and Journeys]
\label{def:schedule}
Given a set of demands $\mc D (t_0)=\{(R_j,f_j)\}_{j\in {\mc J(t_0)}}$ at time $t_0$, a corresponding set of departure times $\mathcal{S}(t_0)=\{\delta_j\}_{j\in \hat{\mathcal{J}}(t_0)}$ with $\hat{\mathcal{J}}(t_0)\subseteq \mathcal{J}(t_0)$ and each $\delta_j\in\mathbb{R}$ is a \emph{schedule} for $\mc D(t_0)$ if:
\begin{enumerate}
\item The latest arrival time is not later than the deadline for all demands $j\in \hJ (t_0)$, that is, $a^j_{k_{R_j}} \leq f_j$ where we recall that $k_{R_j}$ is the final destination node for demand $j$ along route $R_j$ and $a^j_{k_{R_j}}$ is computed as in \eqref{eq:a_aggregate} which depends on any realized arrival/departure times at intermediate nodes;
\item the number of vehicles  at a node never exceeds capacity, \emph{i.e.}, for all $v\in{\mc V}$ and all $t \geq 0$,
\begin{equation*}
    \sum_{j:v \in V(R_j)}\mathbf{1}\left(t;\mc M_{v}^j(t_0)\right)\leq C_v
\end{equation*}
where the notation $\mathbf{1}(\cdot;\cdot)$ is an indicator such that $\mathbf{1}(t;[a,b])=1$ if $t\in [a,b]$ and $\mathbf{1}(t;[a,b])=0$ otherwise; and
\item In any finite time interval, only a finite number of UAVs depart.
\end{enumerate}
A schedule is a \emph{complete schedule} if $\hat{\mathcal{J}}(t_0)=\mathcal{J}(t_0)$; otherwise it is a \emph{partial schedule}. For any scheduled demand $j\in\hat{\mathcal{J}}(t_0)$, the pair $(A_j(t),\delta_j)$ is the \emph{journey} of demand $j$ where $A_j(t)$ is the set of realized arrival times as defined above.
\end{definition}
In the above definition, items 1 and 2 are the main features of a schedule, while item 3 is a mild technical requirement that is always satisfied in practice. In this paper, we consider the following problem.

\noindent \textbf{Scheduling Problem.} Given a set of demands $\D(t)$ and a sequence of  scheduling times $\T=\{t_0,t_1,t_2,\ldots\}$ that may be finite or infinite satisfying  $t_i<t_{i+1}$ for all $i$, at each scheduling time $t_i$, for the set of available demands $\D(t_i)$, determine $\hJ(t_i)\subseteq \J(t_i)$ the subset of flights to be scheduled and compute a schedule $\S(t_i)=\{\delta_j\}_{j\in\hJ(t_i)}$ satisfying the properties
\begin{enumerate}
\item $\hJ(t_i)\supseteq \hJ(t_{i-1})$ and $\S(t_i)=\S(t_{i-1})\cup \{\delta_j\}_{j\in \hJ(t_i)\backslash \hJ(t_{i-1})}$, and
\item For all $j\in \hJ(t_i)\backslash \hJ(t_{i-1})$, $\delta_j\geq t_i$
\end{enumerate}
 with $\J(t_{-1}) := \emptyset$ and $\S(t_{-1}):=\emptyset$.

\medskip
Property 1 of the above problem statement implies that once a demand is scheduled for departure, its departure time does not change at future scheduling times. Property 2 captures a causality requirement and implies that newly scheduled demands cannot have departure times in the past.

If $\D$ is time-invariant and only a single scheduling time $t_0=0$ is considered, the scheduling problem is called \emph{static}. This was the focus in our preliminary work \cite{ACC2021}. Otherwise, it is called a \emph{dynamic} scheduling problem.

Dynamic scheduling allows the schedule to be updated dynamically  at certain scheduling times. Schedule updates my be needed to accommodate new demands or because it may not be possible to schedule all known demands at some time, i.e., only a partial schedule can be obtained. Then, at future scheduling times after some flights arrive at intermediate nodes and reduce uncertainty in the remaining travel time, these previously unscheduled demands can be scheduled. Note that scheduling times need not be fixed in advance and can, for example, be triggered when a flight lands at an intermediate node.

We demonstrate how, even with a fixed and known set of demands, dynamic scheduling may be beneficial.

\begin{example}
\label{ex:2link}
\begin{figure}[h]
    \centering    
    \begin{tikzpicture}
    \node[draw, circle] (1) at (0, 1) {$v_1$};
    \node[draw, circle] (2) at (2, 1) {$v_2$};
    \node[draw, circle] (3) at (4, 1) {$v_3$};
    \draw[->] (1) -- node[above] {$[1,4]$} (2);
    \draw[->] (1) -- node[below] {$e_1$} (2);
\draw[->] (2) -- node[above] {$[2,3]$} (3);
    \draw[->] (2) -- node[below] {$e_2$} (3);
    \end{tikzpicture}
    
    \caption{A two-link network that is used to demonstrate the benefits of dynamic scheduling in Example~\ref{ex:2link}.}
    \label{fig:two_link}
\end{figure}
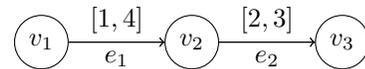

Consider a network with 3 nodes and 2 links as shown in Figure \ref{fig:two_link}. This network has a single route $R=\{e_1, e_2\}$. Suppose the travel time interval of link $e_1$ is $[1,4]$ and of link $e_2$ is $[2,3]$, the wait time at nodes $v_2$ and $v_3$ is $w=1$, and the capacity of nodes $v_2$ and $v_3$ are $C_2=C_3=1$. Consider the time-invariant set of demands $\D=\{(R,8),(R,11)\}$, \emph{i.e.}, there are two demands $\J=\{1,2\}$ with deadlines $f_1=8$ and $f_2=11$ that each take the only route $R$. A partial schedule at time $t=0$ is $\mc S(0)=\{\delta_1\}$ with $\delta_1=0$, the departure time for demand $j=1$. It is not possible to include a departure time for demand $j=2$ in schedule $\mc S(0)$ because of the uncertain travel time. Suppose, however, that demand $j=1$ arrives at node $v_2$ at time $t=2$. Then a new complete schedule at time $t=2$ is given by $\mc S(2)=\{\delta_1,\delta_2\}$ with $\delta_1=0$ and now $\delta_2=3$.
\end{example}

Even when the set of demands is time-invariant and finite, the static scheduling problem is computationally challenging because of the combinatorial decision in determining arrival order at nodes. In \cite{ACC2021}, we propose an integer program which exactly solves this static scheduling problem, but this approach is not suitable for large networks or when schedules are updated dynamically.

Instead, in this paper, we propose an algorithm based on branch-and-bound heuristics that considers at any time only the $K_0 \in \mathbb N$ most urgent demands that must be scheduled, where $K_0$ is a design parameter. Journeys are scheduled for these demands which determine the time intervals that the UAVs are potentially blocking any landing spot.
To avoid conflict, the blocking intervals cannot overlap, and hence these intervals are used for scheduling remaining demands. As the indeterminacy of traveling aggregates along the route and reduces whenever an intermediate node is reached, the schedule is dynamically updated over time to improve the efficiency of the UAM network. The detailed scheduling and updating method is explained in Section~\ref{sec:algorithm}. In the next section, we first derive fundamental theoretical limits for the scheduling problem.

\section{Theoretical Results for Scheduling}
\label{sec:dynamic_sche_alg}
In this section, we present  fundamental theoretical results on when complete schedules are available for a set of demands. We first consider the dynamic scheduling case and obtain necessary conditions for feasibility in the case of dynamic scheduling with finite number of demands at each scheduling time.
Then, we turn to the static scheduling problem and focus on the class of star networks representative of the case when several links lead to a central, main destination. In this case, we obtain sufficient and necessary conditions for scheduling an infinite set of demands representing, e.g., regular periodic flights. For more general network topologies, by considering a star sub-network consisting of a node and its neighbors, this result immediately leads to necessary conditions for feasibility.

\subsection{Necessary Condition for Dynamic Scheduling of Additional Demands}
\label{sec:dyn_thm}
Consider the general network setting defined in Section \ref{sec:problem_formulation}. Given a set of demands $\mc D(t)$ and a sequence of increasing scheduling times $t_0 < t_1 < t_2 < \dots$, we consider the set of available demands $\mc D(t_i) = \{(R_j,f_j)\}_{j\in\mc J(t_i)}$ and a schedule $\mc S(t_i) = \{\delta_j\}_{j\in \hJ(t_i)}$, for any $i = 0,1,\dots$, where $\hJ(t_i) \subseteq \J(t_i)$ and let $\mc J(t_{-1}) = \emptyset$ . 

We first introduce a cumulative departure function in time interval $[t_1, t_2]$ as $\Delta^R_v(t_1,t_2,\mc J) : \mathbb{R}^2 \rightarrow \mathbb{N}$ for all $v \in \mc V\backslash T$, where we recall the set of tail nodes $T$, so that $\Delta^R_v(t_1,t_2,\mc J)$ is the number of UAVs departing from node $v$ in the time interval $[t_1, t_2]$, while traveling through route $R\in \mc R$, i.e., 
\begin{equation}\Delta^R_v(t_1, t_2,\mc J)=|\{j\in {\mc J} \mid R_j=R \text{ and } \delta_{j,v} \in [t_1, t_2]\}|\,.\end{equation}

We then introduce a cumulative arrival function, $\Gamma^R_v(t_1,t_2,\mc J): \mathbb{R}^2 \rightarrow \mathbb{N}$ as the cumulative number of UAVs that travel through route $R\in \mc R$ and must arrive at the node $v \in V(R)\backslash\{0^R\}$ in the time interval $[t_1, t_2]$, i.e., 
\begin{equation}
\Gamma^R_v(t_1, t_2,\mc J)=|\{j\in {\mc J} \mid R_j = R\text{ and } \mc M_v^j \subseteq [t_1, t_2]  \}|. 
\end{equation}

We then define the flow rate at node $v$ through route $R$ in a finite interval $[t_1, t_2]$ as 
\begin{equation}
\label{eq:avg_flow_rate}
    r^R_v(t_1,t_2,\mc J) = \frac{\Delta^R_v(t_1,t_2,\mc J)}{t_2-t_1},
\end{equation}
where $v \in \mc V \backslash T$ and $R \in \mc R$.

Before exploring the necessary conditions of the schedules and demands, we need to define the bottleneck of a network, which is a basis of the necessary conditions and the algorithms.

\begin{definition}[s-t Cut]
An \emph{s-t cut} $\mc C = (S', T')$ is a partition of $\mc V$ such that $s \in S'$ and $t \in T'$, while $S \subseteq S'$ and $T \subseteq T'$.
\end{definition}

For an s-t cut $\mc C = (S', T')$ of the graph $\mc G$, we further define the set of edges with tails in $S'$ and heads in $T'$ as adjacent edges of the cut $\mc C$, $\mc E_{S'\to T'} = \{e\in \mc E: \tau(e) \in S' \text{ and } \sigma(e) \in T'\}$. We let $\tau(\mc E_{S'\to T'})$ (resp., $\sigma(\mc E_{S'\to T'})$) be the set of vertices that are tails (resp., heads) of all adjacency edges.

\begin{definition}[Flow and Maximizing Flow]
A set \{$\Tilde{r}^R_{v}\}_{v\in{\mc V},R\in \mc R}$ with each $\Tilde{r}^R_{v}\in\mathbb{R}_+$ is a \emph{flow} 
if the following 
are satisfied:
\begin{align}
\label{eq:f-1} \Tilde{r}^R_{v} &\geq 0 \quad & \forall& R \in\mc R, \, v \in \mc V\\ 
\label{eq:f-2} \Tilde{r}^R_{v} &= 0 \quad &\forall& R \in \mc R, \, v \in \mc V \setminus V(R)\\
\label{eq:f-3} \Tilde{r}^R_{v} &\leq {r}^R_{v} \quad &\forall& R \in \mc R, \, v \in \mc V \\
\label{eq:f-4}  \Tilde{r}^R_{v} &=  \Tilde{r}^R_{v+1} \quad  & \forall & R \in \mc R, \,  v \in \mc V\backslash T\\
\label{eq:f-5} \sum_{R \in \mc R} \Tilde{r}^R_{v-1} m^R_{v-1} &\leq C_v \quad & \forall & v \in \mc V\backslash S
\end{align}
where $r_{\ell_1,\ell_2}^{R} = C_{\ell_2}/m^{R}_{\ell_1,\ell_2}$ with $m^{R}_{\ell_1,\ell_2}$ as defined in~\eqref{eq:m_span} and we let $r_{\ell_2}^{R} = r_{1,\ell_2}^{R}$. Then $r_{\ell_1,\ell_2}^{R}$ is the maximum flow rate through the node $v$ along the route $R$.

A flow is a \emph{maximizing flow} if it maximizes $\sum_{R \in \mc R} \Tilde{r}^R_{k_R}$ over all flows.
\end{definition}

\begin{definition}[Bottleneck]
\label{def:BN_net}
The \emph{bottleneck} of a UAM network $\mc N = (\mc G, C,\mc R, \ul{x}, \ol{x},w)$ is a set of nodes $\bar{V}$ such that for all $R\in \mc R$, there exists a node  
$v\in \bar{V}$ and a maximizing flow \{$\Tilde{r}^R_{v}\}_{v\in{\mc V},R\in \mc R}$ such that $\Tilde{r}^R_{v} = {r}^R_{v}$ 
and that in the reduced graph $\mc G_{reduced} = (\mc V, \mc E \backslash (\mc E_1\cup \mc E_2) )$, for any $v_1 \in S$ and $v_2 \in T$, $v_1$ and $v_2$ are not connected, where $\mc E_1 = \{e\in \mc E \mid \tau(e) \in \bar{V}\}$ and $\mc E_2 = \{e\in \mc E \mid \sigma(e) \in \bar{V}\}$. 
\end{definition}

The lemma below provides an upper bound on the number of UAVs traveling through a node $v \in \mc V \backslash S$ that may be newly assigned in a schedule at any scheduling time.

\begin{lemma}
\label{lem:node_limit}
Consider a network $\mc N = (\mc G, C, \mc R, \ul x, \ol x,w)$ where $\mc G  = (\mc V, \mc E)$ and a sequence of increasing scheduling times $t_0 < t_1 <t_2 < \dots$. Given the set of demands $\D(t)$ and $R \in \mc R$, let $\mc S(t_i) = \{\delta_j\}_{j\in \mc \hJ(t_i)}$ be a schedule for the set of available demands $\mc D(t_i)$ at the scheduling time $t_i$ for all $i > 0$ where $\hJ(t_i) \subseteq \mc J(t_i)$ and let $\mc J(t_{-1}) = \emptyset$. Then $C_v(t_b-t_a) \geq \sum_{R \in \mc R_v} \Gamma^R_{v}(t_a,t_b,\hJ^\Delta_i)m^R_{v}$ for any time interval $[t_a, t_b]$ and for all $v \in \mc V\backslash S$, where $\mc R_v = \{R\in \mc R \mid v \in V(R) \}$ for all $v\in \mc V$, $\hJ^\Delta_i =\hJ(t_i) \backslash \hJ(t_{i-1})$, $t_0\leq t_a < t_b$ and $t_a, t_b \in \mathbb{R}$.
\end{lemma}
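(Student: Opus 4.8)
The plan is to turn the pointwise capacity constraint of item~2 in Definition~\ref{def:schedule} into an integrated (total-occupancy) bound. Because $\mc S(t_i)$ is a schedule for $\mc D(t_i)$, its blocking intervals satisfy, for every node $v\in\mc V\setminus S$ and every time $t$, the inequality $\sum_{j\in\hJ(t_i):\,v\in V(R_j)}\mathbf 1(t;\mc M_v^j)\le C_v$. Note that this holds for the \emph{complete} index set $\hJ(t_i)$, which is finite since $\mc J(t_i)$ is finite for the finite scheduling time $t_i$. The idea is that integrating this inequality over $[t_a,t_b]$ converts a statement about instantaneous occupancy into one about total blocking time, which is exactly what the right-hand side $\sum_{R\in\mc R_v}\Gamma^R_v(t_a,t_b,\hJ^\Delta_i)\,m^R_v$ measures.

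First I would fix $v\in\mc V\setminus S$ and the window $[t_a,t_b]$ and integrate the capacity inequality over $t\in[t_a,t_b]$. The right-hand side yields $C_v(t_b-t_a)$. Since the index set is finite, I can interchange the sum and the integral, so each summand $\int_{t_a}^{t_b}\mathbf 1(t;\mc M_v^j)\,dt$ becomes the length of the overlap $\mc M_v^j\cap[t_a,t_b]$. This gives $\sum_{j\in\hJ(t_i):\,v\in V(R_j)}\left|\mc M_v^j\cap[t_a,t_b]\right|\le C_v(t_b-t_a)$.

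Next I would pass to the smaller index set of newly scheduled demands $j\in\hJ^\Delta_i=\hJ(t_i)\setminus\hJ(t_{i-1})$ whose blocking interval lies entirely inside the window, i.e.\ $\mc M_v^j\subseteq[t_a,t_b]$. Because every overlap length is nonnegative, discarding the remaining terms only decreases the left side and preserves the inequality. For each retained demand the overlap equals the full length of its blocking interval, which by~\eqref{eq:m_span} is $m^{R_j}_v$ and depends only on the route, not on the realized arrival or departure times. This time-invariance of $m^R_v$ is the key structural fact: even though blocking intervals shift between scheduling times as uncertainty resolves, their lengths do not, so the argument is insensitive to which scheduling time's intervals are used.

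Finally I would group the retained demands by route. For each $R\in\mc R_v$, the number of newly scheduled demands on $R$ with $\mc M_v^j\subseteq[t_a,t_b]$ is exactly $\Gamma^R_v(t_a,t_b,\hJ^\Delta_i)$, and each such demand contributes the common length $m^R_v$; collecting terms gives $\sum_{R\in\mc R_v}\Gamma^R_v(t_a,t_b,\hJ^\Delta_i)\,m^R_v\le C_v(t_b-t_a)$, which is the claim. I do not anticipate a substantive obstacle: the only points needing care are verifying that the capacity constraint is invoked for the full set $\hJ(t_i)$ (so that restriction to the subset $\hJ^\Delta_i$ is valid by nonnegativity) and confirming that $m^R_v$ is indeed route-dependent only, as established in the discussion following~\eqref{eq:m_span}.
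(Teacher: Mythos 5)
The paper omits the proof of this lemma, stating only that it follows from Definition~\ref{def:schedule}; your argument---integrating the pointwise capacity constraint over $[t_a,t_b]$, interchanging the finite sum with the integral, and then restricting to the newly scheduled demands whose blocking intervals lie entirely in the window---is exactly that intended argument and is correct. One small imprecision: blocking-interval lengths do shrink as intermediate arrivals are realized (the length of $\mc M^j_{\ell_1,\ell_2}$ decreases with $\ell_1$), but this does not affect your proof because the demands counted by $\Gamma^R_v(t_a,t_b,\hJ^\Delta_i)$ are newly scheduled at $t_i$ with $\delta_j\geq t_i$ and hence have no realized arrivals, so their intervals have the full length $m^{R_j}_v$ as you use.
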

The proof of Lemma~\ref{lem:node_limit} follows from the Definition~\ref{def:schedule} and is omitted. We then deduce a necessary condition on nodes for given set of demands to have a complete schedule.
Lemma \ref{lem:node_limit} implies that, given a set of demands $\D(t_i)$ at the scheduling time $t_i$, $i>0$, if there exists a node $v \in \mc V \backslash S$ such that 
\begin{equation}
    C_v \cdot \left(\max_{j\in \mc J_v} f_{j} - t_i\right)< \sum_{j\in \mc J_v}m_v^{R_j}\, ,
\end{equation}
where $\mc J_v = \{j\in \J^\Delta_i \mid v\in V(R_j)\}$ and $\J^\Delta_i = \mc J(t_i)\backslash \hJ(t_{i-1})$,
then there does not exist a complete a schedule for $D(t_i)$. We can further narrow this necessary condition by denoting $\ul{\mc M}^{j}_{\ell_1,\ell_2}$ (resp., $\ol{\mc M}^{j}_{\ell_1,\ell_2}$) as the shortest time (resp., longest time) it takes to travel from node $\ell_1-1$ to $\ell_2$, so that $\ul{\mc M}^{j}_{\ell_1,\ell_2} = \sum_{\ell=\ell_1}^{\ell_2} \ul{x}_{\ell}+ (\ell_2-\ell_1)w $ and $\ol{\mc M}^{j}_{\ell_1,\ell_2} = \sum_{\ell=\ell_1}^{\ell_2} \ol{x}_{\ell}+ (\ell_2-\ell_1)w$. We denote $\ul{\mc M}^{j}_{\ell_2}  = \ul{\mc M}^{j}_{1,\ell_2}$ and $\ol{\mc M}^{j}_{\ell_2}  = \ol{\mc M}^{j}_{1,\ell_2}$.
We let 
\begin{equation}
\label{eq:f_j_v}
    f_{j,v} = f_j-\ol{\mc M}^{j}_{v}
\end{equation}
for all $j \in \mc J$ and $v\in V(R_j)$ so that $f_{j,v}$ is the latest arrival (resp., departure) time for demand $j$ at node $v$ when $v \neq 0$ (resp., $v=0$) such that the deadline at the destination $k_{R_j}$ is not violated. Therefore, for all $v \in \mc V \backslash S$, 
\begin{equation}
    C_v \cdot \left(\max_{j\in \mc J_v} f_{j,v} - \min_{j\in \mc J_v} \ul{\mc M}_v^j\right) \geq \sum_{j\in \mc J_v} m_v^{R_j}
\end{equation}
is a necessary condition for the set of demands $D(t_i)$ to have a complete schedule.

Theorem \ref{thm:network} below can help to come up with another necessary condition for the existence of a complete schedule for a set of demands at any scheduling time.  

\begin{thm}
\label{thm:network}
Consider a network $\mc N = (\mc G, C,\mc R, \ul x, \ol x,w)$ where $\mc G  = (\mc V, \mc E)$ and a sequence of increasing scheduling times $t_0 < t_1 <t_2 < \dots$. Given the set of demands $\mc D(t)$ and $R \in \mc R$, let $\mc S(t_i) = \{\delta_j\}_{j\in \mc \hJ(t_i)}$ be a schedule for the set of available demands $\mc D(t_i)$ at the scheduling time $t_i$, for any $i > 0$ and let $\J(t_{-1}) = \emptyset$.
If there exists a schedule $\mc S(t_i) = \{\delta_j\}_{j\in \hJ(t_i)}$, then it must satisfies:
\begin{equation}
    \label{eq:limit_net_BN}
     \sum_{R \in\mc R} \Delta^R_{0}(t_a,t_b,\hJ^\Delta_i)  \leq (t_b-t_a)\sum_{v\in \bar{V}}\sum_{R \in \mc R_v} {r}^R_v \,
\end{equation}
for any $t_a, t_b$ such that $t_0 < t_a \leq t_b$, where $\bar{V}$ is the bottleneck, $\hJ^\Delta_i =\hJ(t_i) \backslash \hJ(t_{i-1})$, and $\mc R_v = \{R\in \mc R \mid v \in V(R) \}$ for all $v\in \mc V$, i.e., the number of new departures assigned in the interval $[t_a, t_b]$ must be less than the product of the length of the time interval and the sum of maximum possible flow through the bottleneck of the network.
\end{thm}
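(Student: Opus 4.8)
The plan is to read the bottleneck $\bar V$ as a min-cut of the network and to show that every UAV newly scheduled to depart its origin in $[t_a,t_b]$ must consume landing capacity at some \emph{saturated} bottleneck node, so that the origin-departure count is controlled by the throughput of the cut. First I would establish the structural fact that each route crosses the bottleneck: by Definition~\ref{def:BN_net}, deleting every edge incident to $\bar V$ disconnects each source in $S$ from each sink in $T$ in $\mc G_{reduced}$. Since a route $R\in\mc R$ runs from its origin $0^R\in S$ to its destination $k_R^R\in T$, it cannot survive this deletion, so $V(R)\cap\bar V\neq\emptyset$. Definition~\ref{def:BN_net} moreover supplies, for each $R$, a node $v_R\in V(R)\cap\bar V$ that is saturated, $\tilde r^{R}_{v_R}=r^{R}_{v_R}$, in a maximizing flow; I would fix this assignment $R\mapsto v_R$ once and for all.

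Next I would convert origin departures into occupancy at the assigned bottleneck node. For a demand $j$ on route $R$ departing its origin at $\delta_j=\delta_{j,0}\in[t_a,t_b]$, the endpoints of its blocking interval $\mc M^{j}_{v_R}$ from \eqref{eq:M_span} are $\delta_j+\ul{\mc M}^{j}_{v_R}$ and $\delta_j+\ol{\mc M}^{j}_{v_R}+w$, and these offsets depend only on the route $R$. Hence $\mc M^{j}_{v_R}$ lies in the fixed window $W_R=[\,t_a+\ul{\mc M}^{j}_{v_R},\,t_b+\ol{\mc M}^{j}_{v_R}+w\,]$, whose length is exactly $(t_b-t_a)+m^{R}_{v_R}$ by \eqref{eq:m_span}. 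Thus every demand counted by $\Delta^R_0(t_a,t_b,\hJ^\Delta_i)$ is also counted by $\Gamma^R_{v_R}$ over $W_R$, giving $\Delta^R_0(t_a,t_b,\hJ^\Delta_i)\le \Gamma^R_{v_R}(W_R,\hJ^\Delta_i)$. Applying Lemma~\ref{lem:node_limit} at $v_R$ over a common window containing every $W_R$ (taken over the routes sharing this bottleneck node) bounds the occupancy $\sum_{R}\Gamma^R_{v_R}m^{R}_{v_R}$ by $C_{v_R}$ times the window length, which isolates, per route, an estimate of the form $\Gamma^R_{v_R}\le r^{R}_{v_R}(t_b-t_a)+C_{v_R}$ with $r^{R}_{v_R}=C_{v_R}/m^{R}_{v_R}$.

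Finally I would sum over all routes. Grouping routes by their assigned bottleneck node and summing the per-route estimates yields $\sum_{R}\Delta^R_0(t_a,t_b,\hJ^\Delta_i)\le (t_b-t_a)\sum_{R}r^{R}_{v_R}$ up to lower-order terms, and since each $v_R\in\bar V$ and $\{R:v_R=v\}\subseteq\mc R_v$ with $r^R_v\ge 0$, the right-hand side is dominated by $(t_b-t_a)\sum_{v\in\bar V}\sum_{R\in\mc R_v}r^R_v$, which is precisely \eqref{eq:limit_net_BN}. The main obstacle is the endpoint handling in the timing step: a finite window of length $(t_b-t_a)+m^{R}_{v_R}$ can accommodate one blocking interval more than the pure rate $r^{R}_{v_R}(t_b-t_a)$ allows, so each route contributes an additive $O(C_{v_R})$ slack and \eqref{eq:limit_net_BN} is really the dominant throughput term (tight in rate, exact in the limit of large $t_b-t_a$, and with the boundary constants immaterial for its use as a necessary condition). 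Controlling this boundary bookkeeping, while charging each route to exactly one saturated cut node so that the cut capacity $\sum_{v\in\bar V}\sum_{R\in\mc R_v}r^R_v$ is never over-committed, is where the real work lies.
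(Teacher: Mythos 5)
Your overall route is the same as the paper's: interpret the realized departure rates as a flow, push the origin departures forward to occupancy at saturated bottleneck nodes, invoke Lemma~\ref{lem:node_limit} to cap that occupancy by $C_v$ per unit time, and sum over the cut $\bar V$. The paper's own proof does this more tersely --- it simply asserts that the empirical rates $r^R_v(t_1,t_2,\hJ^\Delta_i)$ satisfy the flow constraints \eqref{eq:f-1}--\eqref{eq:f-5} and then appeals to Definition~\ref{def:BN_net} --- whereas you make the charging of each route to a single saturated node $v_R\in V(R)\cap\bar V$ explicit, and your observation that every route must intersect $\bar V$ (because deleting the edges incident to $\bar V$ disconnects $S$ from $T$) is a step the paper leaves implicit. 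In terms of ideas, nothing is missing relative to the paper.

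The one substantive issue is the boundary term you yourself flag. Translating a departure count $\Delta^R_0$ over $[t_a,t_b]$ into an arrival count $\Gamma^R_{v_R}$ requires enlarging the window to length $(t_b-t_a)+m^R_{v_R}$, and Lemma~\ref{lem:node_limit} applied to that enlarged window yields an estimate of the form $\Gamma^R_{v_R}\le r^R_{v_R}(t_b-t_a)+C_{v_R}$ rather than $r^R_{v_R}(t_b-t_a)$. As you admit, this gives \eqref{eq:limit_net_BN} only up to an additive $O\bigl(\sum_R C_{v_R}\bigr)$ slack, i.e., as a long-run rate bound, not as the exact inequality claimed for every finite interval; so your argument, as written, proves a slightly weaker statement than the theorem asserts. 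You should be aware, however, that the paper's proof has the same defect hidden in the unjustified step ``$r^R_v(t_1,t_2,\hJ^\Delta_i)\le r^R_v$ by Lemma~\ref{lem:node_limit}'': the lemma bounds $\Gamma$, not $\Delta$, and the same window-enlargement constant arises there too (indeed \eqref{eq:limit_net_BN} cannot hold literally when $t_b-t_a$ is very small, since the right-hand side then drops below one while a single departure can still be scheduled). Your proposal is therefore an honest, more explicit rendering of the paper's argument, and the residual gap you identify reflects a looseness in the theorem's statement and the paper's proof rather than a flaw in your reasoning.
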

\begin{proof}
We first rewrite \eqref{eq:limit_net_BN} as $\sum_{R \in \mc R} r^R_{0}(t_1,t_2,\hJ^\Delta_i)  \leq \sum_{v\in \bar{V}}\sum_{R \in \mc R_v} {r}^R_v$.
To complete the proof, we only need show that flow rate $r^R_v(t_1,t_2,\hJ^\Delta_i)$ at node $v$ through route $R$ is actually limited by the constraints \eqref{eq:f-1}--\eqref{eq:f-5}. Noticeably, we can consider it as a static scheduling while because the previous schedule will only reduce the number of UAVs to be scheduled depart in a time interval. Therefore, we assume there are no other assigned demands. 

First of all, consider $R\in \mc R$, then by definition it holds that $r^R_v(t_1,t_2,\hJ^\Delta_i) \geq 0$ for all $v\in \mc V$, and $r^R_v(t_1,t_2,\hJ^\Delta_i) = 0$ for all $v \notin V(R)$. By utilizing Lemma~\ref{lem:node_limit} it follows that $r^R_v(t_1,t_2,\hJ^\Delta_i) \leq r_v^R$ for all $R\in \mc R$ and $v\in \mc V$,  and $\sum_{R\in \mc R} r^R_{v-1}(t_1,t_2,\hJ^\Delta_i) m^R_{v-1} \leq C_v$ for all $v\in \mc V \backslash T$. 
In the static scheduling scenario we can consider $r_v^R(t_1,t_2,\hJ^\Delta_i)$ to be the static along the route, as a result, \eqref{eq:f-4} becomes trivial. Therefore, the flow rate $r^R_v(t_1,t_2)$ is actually limited by the constraints \eqref{eq:f-1}--\eqref{eq:f-5} and by Definition \ref{def:BN_net} of the bottleneck of a network, it follows that $\sum_{R \in \mc R} r^R_{0}(t_1,t_2,\hJ^\Delta_i)  \leq \sum_{v\in \bar{V}}\sum_{R \in \mc R_v} {r}^R_v$.
\end{proof}
\medskip
Similar to the necessary condition of each node, we can conclude from Theorem \ref{thm:network} that, for any scheduling time $t_i$, where $i>0$, there exists a complete schedule for the set of demands $\mc D(t_i)$ only if
\begin{equation}
    |\mc J| \leq  \left(\max_{j\in \J} (f_{j,0})-t_i\right) \cdot \sum_{v\in \bar{V}}\sum_{R \in \mc R_v} {r}^R_v \, .
\end{equation}

\subsection{Fundamental Limits for Static Scheduling}
\label{sec:acc_thm}
We investigate fundamental limits for static scheduling on the class of star graphs $\mc G_{*} = (\mc V_*,\mc E_*)$, where the set of nodes $\mc V_*$ consists of a central node $v_0$ and $L$ leaf nodes $v_{l}$ for $l=1,2,\hdots, L$. 
Then $\mc E_* = \{(v_{l}, v_{0})\mid  1\leq l\leq L\}$ is the set of links for the network. We label edge $(v_i,v_0)$ simply as edge $i$. We consider there to be $L$ routes in $\mc R$ and each route only consists of one edge.
We call such a network a \emph{local network} because it can be interpreted as a local portion of a larger network where we study only incoming flights to a particular node. 
Consider the static set of demands $\mc D = {(R_j,f_j)}_{j\in \mc J}$. We then define the collection of demands $\mc D$ as \emph{feasible} if there exists a schedule for $\mathcal{D}$.
If $\mathcal{D}$ is a finite collection of demands, then there always exists a schedule since departure times may be scheduled as early as needed to satisfy capacity constraints and desired arrival times. When $\mathcal{D}$ is an infinite collection of demands, then $\mathcal{D}$ may or may not be feasible. We assume that the number of deadlines within the time interval $[\tau, \tau+T]$ is finite for all $T < \infty$ and the limiting average number of deadlines in $[\tau, \tau+T]$ as $T\to \infty$ exists and is constant for varying $\tau\in \mathbb{R}$, \emph{e.g.}, $\mc D$ is a periodic set of demands.  
We next explore the fundamental limitations on the feasibility of the infinite set of demands $\mathcal{D}$ in the remainder of this section.

We introduce a cumulative departure function in time interval $[t_1, t_2]$ as $\Delta_v(t_1,t_2) : \mathbb{R}^2 \rightarrow \mathbb{N}$ for all $v \in \mc V\backslash\{v_0\}$ so that $\Delta_v(t_1,t_2)$ is the number of UAVs departing from node $v$ in the time interval $[t_1, t_2]$, i.e., 
$$\Delta_v(t_1, t_2)=|\{j\in {\mc J} \mid o_j=v\text{ and } \delta_j\in [t_1, t_2]\}|\,.$$

Given the cumulative departure function, we define the long-term average departure rate $r_v$ at node $v\in \mc V\backslash\{v_0\}$ as
\begin{equation}
\label{eq:r_v_local}
r_v \defineas \lim_{\substack{T\rightarrow +\infty}} \frac{1}{T}\, \Delta_v(\tau,\tau+T)  \,, \quad \forall \tau \in \mathbb{R}.
\end{equation}

In the same manner, we introduce a cumulative arrival function $\Gamma_v(t_1,t_2) : \mathbb{R}^2 \rightarrow \mathbb{N}$ as the cumulative number of UAVs that depart from origin $v$ and arrive at the destination in the time interval $[t_1, t_2]$, i.e., $\Gamma_v(t_1, t_2)=|\{j\in {\mc J} \mid o_j=v\text{ and vehicle arrives in }  [t_1, t_2]\}|$. In \cite[Lemma 1]{ACC2021}, it was established that for a local network, $\lim_{\substack{T\rightarrow +\infty}} \frac{1}{T}\, \Gamma_v(\tau,\tau+T) = r_v$ for all $ \tau \in \mathbb{R}$.

In the following theorem, we obtain a necessary and sufficient condition for the existence of a feasible schedule for a local network when the set of demands $\mc D$ is infinitely large, so that we can say immediately if there exists any feasible schedule. This will hence provide fundamental limits for how large demands a local network can handle.

\begin{thm}
\label{cor:multi_c}
Consider a local UAM network $\mc N$. 
An infinite set of demands $\mc D$ is feasible if and only if
\begin{equation}
\label{eq:nec_suf_cond}
 \sum_{1 \leq i\leq L} r_{v_i} \cdot(\ol{x}_i - \ul{x}_i+w) \leq C_{v_0} \,,
\end{equation}
where $r_{v_i}$ is as given in \eqref{eq:r_v_local} for all $v_i \in \mc V\backslash \{v_0\}$ and we recall $C_{v_0}$ is the capacity of the destination node $v_0$.
\end{thm}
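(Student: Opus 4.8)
The plan is to prove the two implications of the equivalence separately. Necessity of~\eqref{eq:nec_suf_cond} comes from integrating the hub capacity constraint over long windows, while sufficiency comes from constructing an explicit schedule whose feasibility is guaranteed by the rate bound. I expect sufficiency, and specifically the equality case of~\eqref{eq:nec_suf_cond}, to be the main obstacle.

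For necessity, suppose a schedule $\{\delta_j\}_{j\in\mc J}$ exists. Since every route in a local network is a single edge, a demand $j$ using edge $i$ blocks the hub over $\mc M^j_{v_0}=[\delta_j+\ul{x}_i,\;\delta_j+\ol{x}_i+w]$, an interval of length $\ol{x}_i-\ul{x}_i+w$. Fixing $\tau$ and integrating the constraint $\sum_j\mathbf 1(t;\mc M^j_{v_0})\le C_{v_0}$ over $[\tau,\tau+T]$ gives
\begin{equation}
\sum_{1\le i\le L}\Gamma_{v_i}(\tau,\tau+T)\,(\ol{x}_i-\ul{x}_i+w)\le\int_\tau^{\tau+T}\sum_j\mathbf 1(t;\mc M^j_{v_0})\,dt\le C_{v_0}\,T,
\end{equation}
where the left-hand side lower-bounds the integral by counting only blocking intervals contained in the window; this count agrees with $\Gamma_{v_i}(\tau,\tau+T)$ up to the at most $2C_{v_0}$ intervals straddling an endpoint, which I discard. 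Dividing by $T$, letting $T\to\infty$, and using $\tfrac1T\Gamma_{v_i}(\tau,\tau+T)\to r_{v_i}$ from~\cite[Lemma~1]{ACC2021} kills the boundary loss and yields~\eqref{eq:nec_suf_cond}.

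For sufficiency I would recast the task as non-preemptive scheduling on $C_{v_0}$ identical machines. Placing a demand $j$ on edge $i$ at departure $\delta_j=f_j-\ol{x}_i$ makes its blocking interval $[f_j-(\ol{x}_i-\ul{x}_i),\,f_j+w]$ end exactly at $f_j+w$, so each demand is a job of fixed length $m_i\defineas\ol{x}_i-\ul{x}_i+w$ that must finish by the deadline $d_j=f_j+w$, and the capacity condition is precisely that at most $C_{v_0}$ jobs overlap at any instant. Definition~\ref{def:schedule} places no lower bound on $\delta_j$, so the jobs carry no release times; the only structural restriction is clause~3, that a bounded time window contain only finitely many departures. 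This is exactly why~\eqref{eq:nec_suf_cond} is the right hypothesis: it states that the long-run work density $\sum_i r_{v_i}m_i$ does not exceed the number of machines, which is the schedulability bound for earliest-deadline-first scheduling. I would construct the schedule by an as-late-as-possible earliest-deadline-first sweep, processing demands in increasing $d_j$ and assigning each to the machine that is free latest yet still allows completion by $d_j$, and certify that no deadline is violated by the demand-bound estimate that~\eqref{eq:nec_suf_cond} supplies: over any window the total length of jobs due within it is asymptotically at most $C_{v_0}$ times the window length, so no window can be oversubscribed.

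The hard part will be sufficiency at equality, $\sum_i r_{v_i}m_i=C_{v_0}$, where the blocking intervals must tile the hub's capacity with vanishing idle time. When the lengths $m_i$ are rationally independent there is no exactly periodic tiling, so one cannot simply schedule a single period and repeat it; instead I would first dispatch the strict case $\sum_i r_{v_i}m_i<C_{v_0}$, where the earliest-deadline-first construction keeps a uniformly bounded backlog and meets every deadline with slack, and then recover the equality case as a limit, using the freedom to depart early to absorb the residual fragmentation. The point demanding the most care is verifying that the limiting schedule still satisfies all three clauses of Definition~\ref{def:schedule}, and clause~3 in particular, since the leveling of demand bursts must not drive unboundedly many departures into any finite interval.
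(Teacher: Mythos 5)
Your necessity argument is essentially the paper's: the paper routes Theorem~\ref{cor:multi_c} through Lemma~\ref{thm:neccessary}, whose content is exactly your step of integrating the pointwise capacity constraint $\sum_j \mathbf{1}(t;\mc M^j_{v_0})\leq C_{v_0}$ over $[\tau,\tau+T]$, lower-bounding the integral by the total length of fully contained blocking intervals, discarding the $O(C_{v_0})$ boundary terms, and passing to the limit using \cite[Lemma~1]{ACC2021}. That half is sound and needs no further comment.

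The sufficiency half has a genuine gap at its central step. You reduce to non-preemptive scheduling of fixed-length jobs on $C_{v_0}$ identical machines and then assert that the density condition \eqref{eq:nec_suf_cond} ``is the schedulability bound for earliest-deadline-first scheduling.'' That is not a citable fact in the multiprocessor, non-preemptive setting: for $C_{v_0}>1$ the utilization bound $U\leq C_{v_0}$ is necessary but not sufficient for EDF-type policies (this is the classical Dhall-effect regime), and non-preemptive greedy assignments admit further anomalies. Moreover, your demand-bound certificate is only ``asymptotic'': since all departures must satisfy $\delta_j\geq t_0$ (this release constraint is what makes necessity true at all --- without it every demand set would be trivially feasible by pushing departures into the far past), a burst of deadlines near $t_0$ can oversubscribe a short window even when the long-run density is strictly below $C_{v_0}$, and you never show how the regularity assumption on $\mc D$ (constant limiting deadline density) bounds such bursts uniformly. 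This is precisely the work the paper's Lemma~\ref{thm:sufficient} is doing: it establishes the uniform backlog bound \eqref{eq:sufm_1}, i.e., that the cumulative reserved hub time never exceeds the elapsed deadline horizon by more than a fixed constant $\ol{M}$, and feasibility is then read off from that certificate. You gesture at a ``uniformly bounded backlog'' but do not prove the bound, and you explicitly defer both the equality case of \eqref{eq:nec_suf_cond} and the verification of clause~3 of Definition~\ref{def:schedule}. Separately, the paper proves only the case $C_{v_0}=1$ and lifts it to general $C_{v_0}$; your direct multi-machine treatment would be a legitimate alternative route if the EDF step were actually established, but as written the sufficiency direction rests on an unproved and, in general, false scheduling principle.
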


The proof of Theorem \ref{cor:multi_c} is omitted but follows from the two lemmas below, which consider the special case when $C_{v_0}=1$. 
\begin{lemma}
\label{thm:neccessary}
Consider a local UAM network $\mc N$ with $C_{v_0} = 1$. If an infinite set of demands $\mc D$ is feasible, then
\begin{equation}
\label{eq:nec_cond}
 \sum_{1 \leq i\leq L} r_{v_i} \cdot(\ol{x}_i - \ul{x}_i+w) \leq 1 \,,
\end{equation}
where $r_{v_i}$ is as given in \eqref{eq:r_v_local} for all $v_i \in \mc V\backslash \{v_0\}$.
\end{lemma}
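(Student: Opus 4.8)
The plan is to exploit the capacity constraint at the central node $v_0$ directly. Since $C_{v_0}=1$, at every instant at most one UAV may occupy the single landing spot, so the blocking intervals $\mc M^j_{v_0}$ associated with the demands arriving at $v_0$ must be pairwise disjoint: any overlap would force two UAVs to block the spot simultaneously, violating item~2 of Definition~\ref{def:schedule}. This disjointness is precisely the mechanism behind Lemma~\ref{lem:node_limit}, so I would first specialize that lemma to the node $v_0$ in the static setting, where all demands are scheduled at once and $\hJ=\mc J$.

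Concretely, fix an arbitrary window $[\tau,\tau+T]$. For each leaf $v_i$, the quantity $\Gamma_{v_i}(\tau,\tau+T)$ counts the UAVs originating at $v_i$ whose blocking interval $\mc M^j_{v_0}$ lies within the window, and each such interval has length $m^{R_i}=\ol{x}_i-\ul{x}_i+w$ by \eqref{eq:m_span}. Because these intervals are disjoint and all contained in $[\tau,\tau+T]$, their total length cannot exceed the window length, giving
\begin{equation*}
\sum_{i=1}^{L}\Gamma_{v_i}(\tau,\tau+T)\,\bigl(\ol{x}_i-\ul{x}_i+w\bigr)\le T = C_{v_0}\,T.
\end{equation*}
I would then divide by $T$ and let $T\to\infty$. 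Since the sum has only $L$ terms, the limit passes inside term by term, and invoking the result from \cite[Lemma~1]{ACC2021} that $\lim_{T\to\infty}\tfrac{1}{T}\Gamma_{v_i}(\tau,\tau+T)=r_{v_i}$ converts each arrival count into the departure rate $r_{v_i}$. This yields exactly \eqref{eq:nec_cond}.

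The main obstacle is bookkeeping at the window boundaries rather than anything deep: $\Gamma_{v_i}$ counts only blocking intervals \emph{fully} contained in $[\tau,\tau+T]$, so demands whose intervals straddle $\tau$ or $\tau+T$ are dropped from the count while still consuming capacity, and one must check these contribute negligibly. Since every blocking interval has length at most $\max_i m^{R_i}<\infty$, only a bounded number of intervals can straddle each endpoint, so the discarded blocking time is $O(1)$ and vanishes after dividing by $T$. The only remaining point to confirm is that the limits defining $r_{v_i}$ exist, which is guaranteed by the standing assumption that the limiting average deadline rate exists, together with the arrival--departure rate equality from \cite{ACC2021}.
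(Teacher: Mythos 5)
Your argument is correct and is essentially the proof the paper intends (the paper omits it, deferring to the machinery it sets up): disjointness of the blocking intervals at $v_0$ forced by item~2 of Definition~\ref{def:schedule} with $C_{v_0}=1$, the resulting counting bound of the form in Lemma~\ref{lem:node_limit}, division by $T$, and the cited fact from \cite{ACC2021} that $\tfrac{1}{T}\Gamma_{v_i}(\tau,\tau+T)\to r_{v_i}$. Your handling of the boundary terms is also sound, since disjointness means at most one blocking interval can straddle each endpoint of the window, so the correction is $O(1)$ and vanishes in the limit.
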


\begin{lemma}
\label{thm:sufficient}
Consider a local UAM network $\mc N$ with $C_{v_0} = 1$ and a countably infinite set of demands $\mc D=\{(o_j,f_j)\}_{j\in {\mc J}}$ with $f_j > t_0$ for all $j \in \mc J=\mathbb{N}$. 
If $\sum_{1 \leq i\leq L} r_{v_i} \cdot ( \ol{x}_i - \ul{x}_i+w) \leq 1$, then there always exists $\ol{M} \in \mathbb{R}$ such that
\begin{equation}
\label{eq:sufm_1}
    \sum_{1 \leq i\leq L}\Gamma_{v_i}(t_0,t_0+T)\cdot ( \ol{x}_i - \ul{x}_i+w) - f_T \leq \ol{M}
\end{equation}
for any $T > 0$, where $f_T = \max_{\{j\in \mc J\mid f_j\leq T\}}\{f_j\}$ is the last deadline that needs to be achieved before $T$.
In particular, this implies that the set of demands $\mc D$ is feasible.
\end{lemma}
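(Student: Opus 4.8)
The plan is to treat this as a single–server scheduling problem and to use \eqref{eq:sufm_1} as a bounded–backlog certificate. Because $C_{v_0}=1$, the reserved blocking intervals $\mc M_{v_0}^j$ at the central node must be pairwise disjoint. For a demand $j$ on route $i$ each such interval has the fixed length $m_i = \ol{x}_i - \ul{x}_i + w$, can be positioned anywhere by the choice of $\delta_j$, and must end no later than $f_j + w$ in order to respect the deadline $a_{k_{R_j}}^j \le f_j$ (cf.\ \eqref{eq:a_aggregate}--\eqref{eq:M_span}). Thus scheduling reduces to packing, on a single timeline, a job of processing time $m_{i}$ and deadline $d_j \defineas f_j + w$ for each demand $j$, with the intervals non-overlapping. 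Crucially, feasibility in Definition~\ref{def:schedule} places no lower bound on the departure times (the causality constraint $\delta_j \ge t_i$ belongs to the Scheduling Problem, not to the mere existence of a schedule), so the jobs carry no release-time constraint and may be started arbitrarily early, subject only to local finiteness.

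I would prove the two assertions in turn. For the bound \eqref{eq:sufm_1}, set $h(T) \defineas \sum_{1\le i\le L}\Gamma_{v_i}(t_0,t_0+T)\,m_i - f_T$ and show $\sup_{T>0} h(T) < \infty$, which then defines $\ol M$. The cited result $\lim_{T\to\infty}\frac1T\Gamma_{v_i}(t_0,t_0+T) = r_{v_i}$ (\cite[Lemma 1]{ACC2021}) gives $\sum_i \Gamma_{v_i}(t_0,t_0+T)\, m_i = T\sum_i r_{v_i} m_i + (\text{deviation})$, and the hypothesis $\sum_i r_{v_i} m_i \le 1$ makes the leading term at most $T$. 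Since the deadlines have a constant limiting density and $f_T$ is the last deadline before $T$, the gap $T-f_T$ is controlled by the maximal inter-deadline spacing, so $f_T \ge T - \mathrm{const}$; subtracting, the linear parts cancel and $h(T)$ is bounded above, and I would take $\ol M = \sup_{T>0} h(T)$.

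For feasibility I would schedule the jobs by earliest-deadline-first: order the demands so that $d_{(1)}\le d_{(2)}\le\cdots$ and pack the blocking intervals back-to-back from a single fixed time $s_0 \defineas t_0 - \ol M - c$, where $c\ge t_0$ absorbs the bounded offsets between windows; this is admissible precisely because feasibility permits early departures. By Jackson's earliest-due-date rule, which is optimal for a single machine when all jobs are available at the start, it suffices to verify that the completion time $s_0 + \sum_{\ell\le k} m_{i(\ell)}$ of the $k$-th job does not exceed $d_{(k)}$. The partial sum $\sum_{\ell\le k} m_{i(\ell)}$ is exactly the total blocking work with deadline at most $d_{(k)}$, i.e.\ a quantity of the form $\sum_i \Gamma_{v_i}\, m_i$, which by \eqref{eq:sufm_1} exceeds the corresponding last deadline by at most $\ol M$, and that deadline is at most $d_{(k)}$ up to offsets absorbed into $c$. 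Hence the completion time is at most $s_0 + d_{(k)} + \ol M = d_{(k)} + t_0 - c \le d_{(k)}$, so every deadline is met; since $\ol M$ and $c$ are fixed constants, the schedule uses only a finite head start and is locally finite, satisfying item~3 of Definition~\ref{def:schedule}, and thus exhibits a schedule proving $\mc D$ feasible.

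The main obstacle is the uniform bound in the second step. The cited convergence $\frac1T\Gamma_{v_i}\to r_{v_i}$ only controls the deviation from linear growth in the limit, whereas at the boundary case $\sum_i r_{v_i} m_i = 1$ the leading terms cancel exactly and a merely $o(T)$ deviation would not yield a finite $\ol M$. Establishing that the deviation is \emph{bounded}, rather than just sublinear, is where the standing assumption that the deadlines have a limiting average constant in $\tau$ (e.g.\ a periodic demand set) must be used in an essential way; the same structure is what bounds $T-f_T$. Once the constant $\ol M$ is secured, the reduction to the earliest-due-date rule makes the feasibility conclusion routine, the key point being that a \emph{single} finite head start of size $\ol M$ suffices uniformly over the entire infinite horizon.
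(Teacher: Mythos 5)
The paper itself contains no proof of this lemma (the proofs of Lemmas~\ref{thm:neccessary} and \ref{thm:sufficient} are omitted from this version), so there is nothing to compare your argument against line by line. Your reduction to a single-machine packing problem at $v_0$ with job lengths $m_i=\ol{x}_i-\ul{x}_i+w$ and due dates $f_j+w$, followed by an earliest-due-date packing with a finite head start of size $\ol M$, is surely the intended shape of the argument, and the second half (EDD feasibility \emph{given} a finite $\ol M$ in \eqref{eq:sufm_1}) is essentially sound modulo bookkeeping; your reading that feasibility in Definition~\ref{def:schedule} imposes no lower bound on departure times is also consistent with the paper.

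The genuine gap is the one you flag yourself but do not close: the finiteness of $\ol M$. The only quantitative input you invoke is $\tfrac1T\Gamma_{v_i}(t_0,t_0+T)\to r_{v_i}$, which yields $\Gamma_{v_i}(t_0,t_0+T)=r_{v_i}T+o(T)$ and nothing stronger. In the boundary case $\sum_i r_{v_i}m_i=1$, which the hypothesis explicitly allows, the linear terms in $h(T)$ cancel exactly, and $h(T)$ is then the sum of an $o(T)$ fluctuation and the gap $T-f_T$. An unbounded-but-sublinear fluctuation (e.g.\ bursts of order $\sqrt{T}$ deadlines near time $T$), or unboundedly growing inter-deadline gaps, are both compatible with the paper's standing assumption that the limiting average density of deadlines exists and is independent of $\tau$, yet either one makes $\sup_{T>0}h(T)=+\infty$ and destroys the single uniform head start on which your EDD construction relies. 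So the first assertion of the lemma is not actually established: you need either to restrict to demand profiles with uniformly bounded deviation from linear growth (e.g.\ genuinely periodic $\mc D$, as the paper's ``e.g.'' hints) and prove the bound there, or to derive a uniform-in-$T$ deviation bound from the stated hypotheses; your claim that $f_T\geq T-\mathrm{const}$ needs the same justification. A secondary, fixable issue: $\Gamma_{v_i}$ counts realized arrivals under a schedule, so invoking \eqref{eq:sufm_1} to construct the schedule is circular as written; you should state the certificate in terms of the schedule-independent count of demands with deadline at most $T$, and only afterwards identify that count with $\Gamma_{v_i}$ for the schedule you build.
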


The result \cite[Corollary 1]{ACC2021} in our preliminary work is a slight generalization of Theorem \ref{cor:multi_c} to the case with intermediate nodes along each branch of the star graph. Further, by considering a star sub-network consisting of a node and its neighbors, Theorem \ref{cor:multi_c} immediately leads to necessary conditions for feasibility of the infinity set of demands for networks with a general graph topology.

\begin{algorithm}[t]
\caption{Event-triggered Scheduling Algorithm}
\label{alg:dynamic}
\begin{algorithmic}[1]
\Require
UAM network $\mc N = (\mc G,C,\mc R,\ul{x},\ol{x}, w)$, 
maximum schedule size $K_0$
\State $\hJ(t_{-1}):=\emptyset$, $\S(t_{-1}):=\emptyset$, $i:=0$ , $t_0 = \Call{Now}{} \label{line:initialize} {}$ 
\State $\D(t_0):=\{(R_j,f_j)\}_{j\in {\mc J(t_0)}}$, $A(t_0) := \emptyset$, $\mc C(t_0) := \emptyset$ \Comment{Initialize demands, UAV arrival times and UAV parking spots}
\For{scheduling time $t_i$}
\State $\J_i^\Delta:=\J(t_i)\backslash \hJ(t_{i-1})$ \hfill \Comment{ indices of demands eligible to be scheduled}
\State $K:= \max\{K_0, \abs{\J_i^\Delta}\}$
\While{$K > 0$}
\State $(\D^\dagger,\J^\dagger):=$ subset of $K$ demands and indices with earliest deadlines from set $\{(R_j,f_j)\}_{j\in\J_i^\Delta}$
\State $A:= A(t_i)$, $\mc C:= \mc C(t_i)$
\State $OptSche := \Call{Insertion}{\D^\dagger,\S(t_{i-1}),t_i,A,\mc C,\mc N}$
\If{$OptSche \neq \S(t_{i-1})$}
\State $\S(t_i):=OptSche$ \hfill \Comment{feasible schedule found}
\State $\hJ(t_i):=\hJ(t_{i-1})\cup \J^\dagger$
\State \Call{Break}{}
\Else
\State $K := K-1$ 
\EndIf
\EndWhile
\State Update $\mc C(t_i)$
\State \Call{WaitForEvent}{} \Comment{wait for next event}
\State $i:=i+1$ 
\State $t_i = \Call{Now}{}$
\State Update $D(t_i)$ and $A(t_i)$
\EndFor
\end{algorithmic}
\end{algorithm}

\section{Efficient Algorithms For Dynamic Scheduling}
\label{sec:algorithm}

\begin{figure}[t!]
 \removelatexerror
\begin{algorithm}[H]
\caption{Schedule-Computing Algorithm}
\label{alg:schedule}
\begin{algorithmic}[1]
\Function{Insertion}{$\mc D, \S_{old}, t, A,\mc C,\mc N$}
\State \textbf{Inputs:} demands $\mc D=\{D_j\}_{j\in \mc J}$, existing schedule $\{\delta_j\}_{j\in \mc J_{old}} := \S_{old}$, time $t$, arrival times $A$, parking-spot assignments $\{\{c^j_v\}_{v\in V(R_j)}\}_{j\in \mc J(t)} = \mc C$, UAV network $((\mc V, \mc E), C,\mc R,\ul{x},\ol{x}, w): = \mc N$
\State $\ell^j := \argmin_{\ell \in V(R_j)}  a^j_{\ell,k_{R_j}}(t)$ $\quad \forall j\in \mc J_{old}$
\State $\Tilde{M}_{v,c} :=  \cup_{j\in \mc J_{old}|v = \ell^{R_j},\ell \geq \ell^j,c^j_{v}=c} \mc M^j_{\ell^j,v}$  $\forall v\in \mc V$, $c = 1,\dots,C_v$
\State $\Tilde{M} := \{\Tilde{M}_{v,c}\}_{v \in \mc V, c=1,\dots,C_v}$
\State  $\Tilde{M}_\text{max}: = \max_{v\in \mc V, c=1,\dots,C_v}(\sup \Tilde{M}_{v,c})$
\State $\mc D_1:=\{(R_j,f_j)\in\D\mid f_j-m^{R_j}_{k_{R_j}}+w\leq \Tilde{M}_\text{max}\}$, sort descending wrt deadline $f_j$
\State $\mc D_2:=\D\backslash {\mc D_1}$, sort ascending wrt $f_j$
\While{$\mc D_2 \neq \emptyset$}
\State $\S:=\S_{old}$ \Comment{initialization}
\For{$D_j \in \mc D_1$}
\State go through $\Tilde{M}$ and find the latest feasible departure time for $D_j$ which satisfies scheduling constraints at each node along $R_j$ and assign the available spot as $c^j_v$ for all $v\in V(R_j)$
\If{a feasible departure time $\delta_j$ is found}
\State $\S=\S_{old}\cup\{\delta_j\}$; update $\Tilde{M}$
\Else
\State \textbf{return}  $\S_{old}$ \hfill \Comment{No feasible schedule found}
\EndIf
\EndFor
\State  $\S_2:= \Call{prepare\_schedule}{\mc D_2}$
\If{$\S_2 \neq \emptyset$}
\State $\mc J_a  :=$ index set of $\mc D_2$ \label{alg_line:c_start}
\State $f^{node}_{v,c} := \max_{j\in \mc J} f_j+w, \forall v\in \mc V, 1\leq c \leq C_v$
\For{$\mc J_a \neq \emptyset$}
\State $j := \argmax_{j'\in \mc J_a} \sup \mc M^{j'}_{k_{R_{j'}}}$
\State $c^j_v := \argmax_{c = 1,\dots,C_v} f^{node}_{v,c} \quad \forall v\in V(R_j)$
\State $f^{node}_{v,c^j_v} := \inf{\mc M^j_v} \quad \forall v\in V(R_j)$
\State $\mc J_a  := \mc J_a \backslash \{j\}$
\EndFor \label{alg_line:c_end}
\State $\S = \S \cup \S_2$
\State \Return $\S$
\Else
\State remove the first journey $D_j \in \mc D_2$ and append to the head of $\mc D_1$
\EndIf
\EndWhile
\State \textbf{return} $\S_{old}$ \Comment{no feasible schedule found}
\EndFunction
\end{algorithmic}
\end{algorithm} 
\vspace{-1cm}
\end{figure}

In this section, we present an  algorithm for dynamic scheduling and, as a special case, static scheduling with finite demands. At its core, the algorithm creates a schedule at each scheduling time using branch-and-bound heuristics to efficiently determine candidate orderings for departure times and then a linear program to determine optimal departure times from the set of fixed orderings. We divide the algorithm into four algorithm blocks.  
The outermost block, Algorithm \ref{alg:dynamic}, creates schedules $\S(t_i)$ at scheduling times $\T=\{t_0,t_1,t_2,\ldots\}$ given a UAM network $\mathcal{N}$, time-varying demands $\D(t)$, and UAV arrival times $A(t)$ that are updated according to the description in Section \ref{sec:problem_formulation}. We introduce $\mc C_j = \{c^j_\ell\}_{\ell=1}^{k_{R_j}}$ as the set of parking spots occupied by the UAV of the $j$'th journey along its route, where $c^j_\ell \in \{1,\dots,C_{\ell^{R_j}}\}$ is the parking spot at node ${\ell}$ that the UAV occupies upon its arrival. Therefore $\mc C(t) = \{\mc C_j\}_{j\in \mc J(t)}$ is updated while scheduling. 
The scheduling times $\T$ correspond to when new demands become available to the scheduler or when a UAV lands at a node along its journey, and these times are generally not known in advance. In this way, Algorithm \ref{alg:dynamic} acts as an event-triggered algorithm that creates a new schedule each time a vehicle lands or a new demand is introduced. For computational considerations, Algorithm 1 also allows for considering only the first $K_0$ UAV flights with the earliest deadline, and other available demands are scheduled at later scheduling times. In practice, we found that including such a ceiling significantly increases computational speed with negligible effect on the final schedules for all demands.

It is possible that a complete schedule accommodating worst case travel times may not be possible at each scheduling event. In this case, Algorithm \ref{alg:dynamic} creates 
a partial schedule. 
The unassigned demands are then considered at the next scheduling time, and any demand that cannot be fulfilled anymore is ultimately dropped. 

Algorithm \ref{alg:dynamic} seeks a feasible partial or complete schedule minimizing the \emph{Sum of Difference (SoD)} cost
\begin{equation}
\label{eq:sod}
    \text{SoD}=\sum_{j\in \hJ} (f_j-\delta_j)
\end{equation}
where $\hJ$ is the index set for the schedule. We refer to this cost as the SoD cost since it is the sum of the difference between arrival time and departure time for the flights. This cost corresponds to the typical preferences of customers to leave no earlier than necessary while still guaranteeing arrival by a desired deadline. An immediate lower bound for the SoD cost is obtained by considering worst case travel times along the routes for all demands $\mc{\hat{J}}$, ignoring capacity constraints at nodes.

With relatively few demands and a simple network topology, e.g., a star network as considered in \cite{ACC2021}, the exact optimal schedule minimizing \eqref{eq:sod} can be obtained from a  mixed-integer program. However, for general acyclic directed graphs and/or a large set of demands, obtaining a schedule from a mixed-integer program is not computationally tractable. We next propose an efficient but possibly suboptimal approach for these cases 
in Algorithm~\ref{alg:schedule}--\ref{alg:order_find}.

Algorithm \ref{alg:schedule} considers a set of demands that need to be scheduled and partitions them into two sets according to their deadlines and existing schedule. 
To achieve this, the algorithm first computes the time intervals that UAVs will potentially occupy each parking spot of each node according to \eqref{eq:M_span} and the parking spot assignment $\mc C(t)$. This is time-varying since it depends on time-varying arrivals $A_j(t)$, and we let $\Tilde{M}(t)$ collect all of the blocked time intervals computed by~\eqref{eq:M_span} for all journeys at all parking spots of all nodes at time~$t$. When a UAV lands at a node along its route, the time interval that the UAV will potentially occupy any node along the rest of its route reduces since the uncertainty of traveling is reduced, so that we can update $A(t)$ and $\Tilde{M}(t)$ accordingly. When journeys are assigned to new demands at time $t$, $\Tilde{M}(t)$ is used to avoid parking spot conflicts. 
Using $\Tilde{M}(t)$, Algorithm~\ref{alg:schedule} divides demands into those with earlier deadlines, $\D_1$, which are inserted into the current schedule if possible while the demands with later deadlines, $\D_2$, are scheduled using  Algorithm \ref{alg:schedule_1} beyond the latest time of the existing schedule. If a feasible complete schedule for $\D_2$ cannot be found, the demand with the earliest deadline is moved from $\D_2$ to $\D_1$ and the process repeats.

Algorithms \ref{alg:schedule_1} and \ref{alg:order_find} are presented in the Appendix with MATLAB-like syntax. Algorithm \ref{alg:schedule_1} prepares the variables for Algorithm \ref{alg:order_find}, picks the best solution returned from Algorithm \ref{alg:order_find}, and transforms that into a schedule. Algorithm \ref{alg:order_find} is a branch-and-bound algorithm 
that finds a list of possible schedules for the given demands. While the algorithm is inspired by the classical Bratley's algorithm for task scheduling~\cite{buttazzo2011hard}, several adoptions have been made for the problem we are addressing. For instance,  
Bratley's algorithm generally seeks a single schedule with the earliest possible departure times, while  Algorithm \ref{alg:order_find} returns a set of possible departure times aiming to minimize the SoD cost \eqref{eq:sod}.

Algorithm~\ref{alg:order_find} also employs a pruning technique different than Bratley's algorithm. Consider the set of candidate schedules as a rooted tree. The algorithm starts from scheduling the last journey for the unassigned demands by visiting the demands in descending order according to their deadlines, picking a demand, recording the latest possible departure time and removing the demand from the unassigned list. The process is repeated until all demands are assigned or the branch is stopped by the pruning mechanism. If all demands are assigned, the current branch will be stored as a possible schedule and the algorithm will continue with another branch until all branches are considered (visited or pruned). The algorithm then returns all stored schedules.

We suggest several pruning mechanisms to avoid an exhaustive search, with the first two inspired by Bratley's Algorithm:
\begin{enumerate}
    \item If an unassigned demand cannot be scheduled according to the assigned schedules without considering other unassigned demands, the current branch will be discarded.
    \item If any node cannot afford the unassigned demands according to Lemma \ref{lem:node_limit} with the assigned schedules, the current branch will be pruned.
    \item If the demand with latest deadline can be scheduled without interfering with any other demand, then it is scheduled as the last journey to arrive at the destination. 
    \item If two demands choose the same route, their order follows the order of their deadline, i.e., if $D_{j_1},D_{j_2} \in \mc D_{in}$ and $R_{j_1} = R_{j_2}$, while $f_{j_1}<f_{j_2}$, then only the branches with $D_{j_1}$ arriving earlier then $D_{j_2}$ will be preserved.
    \item When searching along a branch, the current scheduled demands are compared with stored schedules. If the SoD cost along the current branch must exceed that of any of the stored schedules, the current branch will be abandoned. 
\end{enumerate}
Even with the above pruning techniques, Algorithm \ref{alg:order_find} may still produce a large set of feasible schedules. Although there can be several branches to explore, it is possible to stop the search at any point, given that at least one branch has been found, to get a timely but sub-optimal solution.

\section{Numerical Study}
\label{sec:numerical}
In this section, we demonstrate our algorithm on two case studies with up to 200 UAVs each. We first demonstrate the  algorithm  on an eight-node network with dynamic scheduling updates. In the second case study, we consider a static scheduling example for a network of the Atlanta region that appeared in our prior work~\cite{ACC2021}. For this case, we show that the proposed algorithm generates a schedule with a nearly optimal cost in considerably less computational time than the exact integer program proposed in \cite{ACC2021}. In both case studies, the proposed algorithm obtained reasonable schedules within 1 second.

\subsection{Case Study 1: Dynamic Scheduling}
\label{sec:case_study}
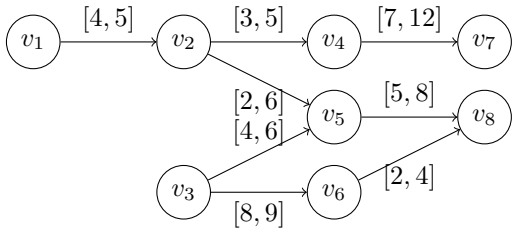
\begin{figure}[h]
    \centering    
    \begin{tikzpicture}
    
    \node[draw, circle] (1) at (0, 1) {$v_1$};
    \node[draw, circle] (2) at (2, 1) {$v_2$};
    \node[draw, circle] (4) at (4, 1) {$v_4$};
    \node[draw, circle] (7) at (6, 1) {$v_7$};

    \node[draw, circle] (5) at (4, 0) {$v_5$};
    \node[draw, circle] (8) at (6, 0) {$v_8$};
    
    \node[draw, circle] (3) at (2, -1) {$v_3$};
    \node[draw, circle] (6) at (4, -1) {$v_6$};

    \draw[->] (1) -- node[above] {$[4,5]$} (2);
    \draw[->] (2) -- node[above] {$[3,5]$} (4);
    \draw[->] (4) -- node[above] {$[7,12]$} (7);
    \draw[->] (2) -- node[below] {$[2,6]$} (5);
    \draw[->] (5) -- node[above] {$[5,8]$} (8);
    \draw[->] (3) -- node[above] {$[4,6]$} (5);
    \draw[->] (3) -- node[below] {$[8,9]$} (6);
    \draw[->] (6) -- node[below] {$[2,4]$} (8);

    \end{tikzpicture}
    
    \caption{A network with $8$ nodes and $8$ links that is used to illustrate the case study in \ref{sec:case_study}.}
    \label{fig:case_study_net}
\end{figure}

The first case study considers the network in Fig. \ref{fig:case_study_net}.   
The set of all possible origins (resp., destinations) is $S = \{v_1, v_3 \}$ (resp., $T = \{v_7, v_8 \}$). We assume the  origins $v_1, v_3$ do not have capacity constraints, while $C_{v_2} = 1$, $C_{v_4} = 2$, $C_{v_5} = 3$, $C_{v_6} = 1$, $C_{v_7} = 3$ and $C_{v_8} = 5$. The links are indicated in the figure and the corresponding travel time intervals are labeled beside the links. We consider four routes $\mc R = \{R^1,R^2,R^3,R^4\}$ with $ R^1 = \{(v_1,v_2),(v_2,v_4),(v_4,v_7)\}$, $R^2 = \{(v_1,v_2),(v_2,v_5),(v_5,v_8)\}$, $R^3 = \{(v_3,v_5),(v_5,v_8)\}$ and $R^4 = \{(v_3,v_6),(v_6,v_8)\}$.
Each UAV remains at the vertistops along its path for $w= 1$ minute after landing (all times are given in minutes). 
Algorithms \ref{alg:dynamic}, \ref{alg:schedule}, \ref{alg:schedule_1}, and~\ref{alg:order_find} are implemented in MATLAB to obtain a schedule $\mathcal{S}=\{(\delta_j)\}_{j\in{\mc J}}$. 

In simulation, realized travel times are uniformly randomly drawn from the travel time intervals. We first consider 43 randomly generated demands. Subsets of demands become available for scheduling across ten scheduling times $\T=\{0,15,30,45,50,60,80,85,100,120\}$.

Fig. \ref{fig:ex_node_8} demonstrates the resulting schedule. The figure compares the scheduled and actual arrivals at node $8$ in the time interval $[100, 160]$ minutes as observed at time $t=100$ minutes (top) and $t=120$ minutes (bottom). The green, orange and blue bars represent the reserved time slots for UAVs traveling through $R^2$, $R^3$ and $R^4$, respectively. UAVs on route $R^1$ are not shown because $R^1$ does not go through node $8$. We label an ID number above each bar to identify the UAVs. The diamond represents the arrival deadline of a demand; the solid-color bar represents the time interval that the UAV could possibly arrive given the schedule and uncertain travel times. The UAV will then stay at the node for time $w$ after arrival, which is represented by the lightly shaded bars in the figure. Therefore, the entire bar represents the time interval the UAV may appear at node $8$ under best and worst case travel times. Comparing the top and the bottom figures, we observe several representative changes: the journey $30$ has not yet arrived at destination by time $100$ in the top plot but is completed by time $120$ in the bottom plot; the time slot reserved for journey $25$ has shrunk because the UAV arrived at an intermediate node along its route in the time interval $[100,120]$ minute so that the uncertainty of its arrival at node $8$ reduces; and journeys $37$--$43$ are newly scheduled in the bottom plot.
\begin{figure}[!t]
\centering
\begin{tikzpicture}
\begin{axis}[%
width=7cm,
height=3.5cm,
scale only axis,
xmin=100,
xmax=160,
xlabel={Time (min)},
ymin=0,
ymax=5.5,
ytick={1,2,3,4,5},
yticklabels = {1,2,3,4,5},
ylabel={Slot},
ylabel near ticks,
title style={align=center,  yshift=-8pt},
title={Schedule for node 8 at time $t=100$}]

\plotjourney{1}{3}{1}{19.123}{20.123}{19.123}{19.776}
\plotjourney{2}{2}{1}{47.077}{48.077}{47.077}{49.399}
\plotjourney{4}{4}{3}{39.696}{40.696}{39.696}{39.809}
\plotjourney{5}{2}{5}{29.973}{30.973}{29.973}{33.986}
\plotjourney{8}{2}{4}{34.642}{35.642}{34.642}{38.953}
\plotjourney{9}{3}{2}{44.686}{45.686}{44.686}{45.186}
\plotjourney{10}{4}{2}{15.902}{16.902}{15.902}{16.708}
\plotjourney{11}{4}{1}{36.337}{37.337}{36.337}{40.187}
\plotjourney{12}{2}{3}{47.082}{48.082}{47.082}{51.624}
\plotjourney{15}{3}{2}{69.875}{70.875}{69.875}{71.390}
\plotjourney{16}{4}{1}{69.579}{70.579}{69.579}{71.667}
\plotjourney{17}{3}{3}{69.727}{70.727}{69.727}{71.966}
\plotjourney{18}{3}{4}{65.100}{66.100}{65.100}{67.949}
\plotjourney{19}{4}{2}{76.423}{77.423}{76.423}{78.042}
\plotjourney{20}{2}{1}{90.374}{91.374}{90.374}{92.626}
\plotjourney{21}{2}{2}{84.042}{85.042}{84.042}{87.081}
\plotjourney{22}{3}{3}{88.309}{89.309}{88.309}{95.045}
\plotjourney{23}{4}{1}{75.335}{76.335}{75.335}{75.917}
\plotjourney{24}{4}{1}{106.059}{110.059}{109.059}{109.059}
\plotjourney{25}{2}{1}{121.222}{130.222}{129.222}{129.222}
\plotjourney{26}{3}{5}{112.463}{118.463}{117.463}{117.463}
\plotjourney{27}{3}{2}{111.469}{117.469}{116.469}{116.469}
\plotjourney{29}{3}{3}{117.986}{123.986}{122.986}{122.986}
\plotjourney{30}{3}{4}{115.292}{121.292}{120.292}{120.292}
\plotjourney{31}{3}{2}{122.795}{128.795}{127.795}{127.795}
\plotjourney{32}{3}{4}{122.916}{128.916}{127.916}{127.916}
\plotjourney{33}{2}{1}{140.087}{149.087}{148.087}{148.087}
\plotjourney{34}{4}{3}{125.222}{129.222}{128.222}{130.777}
\plotjourney{35}{4}{5}{123.222}{127.222}{126.222}{130.127}
\plotjourney{36}{3}{2}{132.184}{138.184}{137.184}{137.184}

\end{axis}
\end{tikzpicture}%
\begin{tikzpicture}
\begin{axis}[%
width=7cm,
height=3.5cm,
scale only axis,
xmin=100,
xmax=160,
xlabel={Time (min)},
ymin=0,
ymax=5.5,
ytick={1,2,3,4,5},
yticklabels = {1,2,3,4,5},
ylabel={Slot},
ylabel near ticks,
title style={align=center,  yshift=-8pt},
title={Schedule for node 8 at time $t=120$}]

\plotjourney{1}{3}{1}{19.123}{20.123}{19.123}{19.776}
\plotjourney{2}{2}{1}{47.077}{48.077}{47.077}{49.399}
\plotjourney{4}{4}{3}{39.696}{40.696}{39.696}{39.809}
\plotjourney{5}{2}{5}{29.973}{30.973}{29.973}{33.986}
\plotjourney{8}{2}{4}{34.642}{35.642}{34.642}{38.953}
\plotjourney{9}{3}{2}{44.686}{45.686}{44.686}{45.186}
\plotjourney{10}{4}{2}{15.902}{16.902}{15.902}{16.708}
\plotjourney{11}{4}{1}{36.337}{37.337}{36.337}{40.187}
\plotjourney{12}{2}{3}{47.082}{48.082}{47.082}{51.624}
\plotjourney{15}{3}{2}{69.875}{70.875}{69.875}{71.390}
\plotjourney{16}{4}{1}{69.579}{70.579}{69.579}{71.667}
\plotjourney{17}{3}{3}{69.727}{70.727}{69.727}{71.966}
\plotjourney{18}{3}{4}{65.100}{66.100}{65.100}{67.949}
\plotjourney{19}{4}{2}{76.423}{77.423}{76.423}{78.042}
\plotjourney{20}{2}{1}{90.374}{91.374}{90.374}{92.626}
\plotjourney{21}{2}{2}{84.042}{85.042}{84.042}{87.081}
\plotjourney{22}{3}{3}{88.309}{89.309}{88.309}{95.045}
\plotjourney{23}{4}{1}{75.335}{76.335}{75.335}{75.917}
\plotjourney{24}{4}{1}{107.570}{108.570}{107.570}{109.059}
\plotjourney{25}{2}{1}{124.451}{128.451}{127.451}{129.222}
\plotjourney{26}{3}{5}{115.572}{116.572}{115.572}{117.463}
\plotjourney{27}{3}{2}{116.128}{117.128}{116.128}{116.469}
\plotjourney{29}{3}{3}{118.501}{122.501}{121.501}{122.986}
\plotjourney{30}{3}{4}{118.243}{119.243}{118.243}{120.292}
\plotjourney{31}{3}{2}{123.282}{127.282}{126.282}{127.795}
\plotjourney{32}{3}{4}{123.616}{127.616}{126.616}{127.916}
\plotjourney{33}{2}{1}{140.087}{149.087}{148.087}{148.087}
\plotjourney{34}{4}{3}{125.222}{129.222}{128.222}{130.777}
\plotjourney{35}{4}{5}{123.222}{127.222}{126.222}{130.127}
\plotjourney{36}{3}{2}{132.184}{138.184}{137.184}{137.184}
\plotjourney{37}{3}{3}{135.087}{141.087}{140.087}{147.674}
\plotjourney{38}{3}{1}{164.273}{170.273}{169.273}{169.273}
\plotjourney{39}{3}{2}{143.087}{149.087}{148.087}{150.023}
\plotjourney{40}{4}{3}{144.087}{148.087}{147.087}{148.542}
\plotjourney{42}{2}{4}{138.087}{147.087}{146.087}{150.077}
\plotjourney{43}{3}{3}{148.939}{154.939}{153.939}{153.939}

\end{axis}
\end{tikzpicture}%
\caption{Scheduled/realized occupation of the five parking spots at Node 8 in the time interval $[100,160]$ determined at time $100$ (top) and time $120$ (bottom). The green, orange and blue bars represent the reserved time slots for UAVs traveling through $R^2$, $R^3$ and $R^4$, respectively. An ID number above each bar identifies the UAVs. The diamond represents the arrival deadline of a journey; the solid-color bar represents the time interval that the UAV is scheduled to arrive. The UAV stays at the node for time $w=1$ after arrival (represented by the lightly shaded bar). Therefore, the entire bar represents the time interval the UAV may or did appear at node $8$. Comparing the top and the bottom figures, we can observe several representative changes: the journey $30$ has not yet arrived at the destination in the top plot computed at time 100 but is completed in the bottom plot computed at time 120; the time slot reserved for journey $25$ has shrunk because the UAV arrived at an intermediate node in the time interval $[100,120]$ so that the uncertainty of its arrival at node $8$ reduced; journeys $37$-$43$ are newly scheduled.}
\label{fig:ex_node_8}
\end{figure}
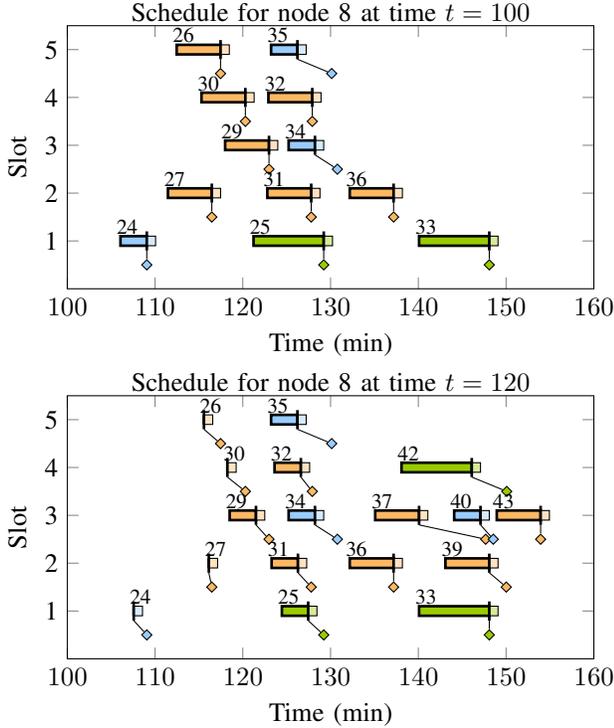

A schedule for  the 43 UAVs described above is obtained in 0.8 seconds with a SoD cost of 759.3 minutes.  By considering worst case travel times for all demands but not capacity constraints, the lower bound for the SoD cost is 746 minutes. Thus, the algorithm produces a schedule that is at least within 2\% of optimal, and since an exact optimal schedule is not computationally tractable, it is unknown exactly how far from optimal the obtained scheduled is, or even if it is, in fact, optimal itself.

Next, we consider a larger set of demands with size 200. The routes are chosen randomly among the four routes and the deadlines are picked randomly in the interval $[40, 1540]$ minutes. The algorithm finds its first feasible schedule at 0.2801 seconds with SoD cost equal to 3759.9 minutes, which is again within 2\% of the lower bound of 3693 minutes. 
\subsection{Case Study 2: Atlanta Network}
\color{black}

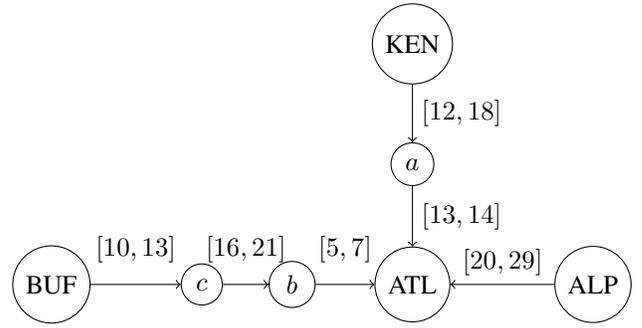
\begin{figure}
    \centering
    \begin{tikzpicture}[scale=0.8]
    \node[draw, circle] (0) at (0,0) {ATL};
    
    \node[draw,circle] (1) at (3,0) {ALP};
    \draw[->] (1) --node[above] {$[20,29]$} (0);
    
    \node[draw,circle] (2a) at (0,2) {$a$};
    \draw[->] (2a) --node[right] {$[13,14]$} (0);
    \node[draw,circle] (2) at (0,4) {KEN};
    \draw[->] (2) --node[right] {$[12,18]$} (2a);
    
    \node[draw,circle] (3a) at (-2,0) {$b$};
    \draw[->] (3a) --node[above=5pt] {$[5,7]$} (0);
    \node[draw,circle] (3b) at (-3.5,0) {$c$};
    \draw[->] (3b) --node[above=5pt] {$[16,21]$} (3a);
    \node[draw,circle] (3) at (-6,0) {BUF};
    \draw[->] (3) --node[above=5pt] {$[10,13]$} (3b);
    \end{tikzpicture}
    \caption{A graph for a UAM network of Atlanta (ATL) with three exurbs: Alpharetta (ALP), Kennesaw (KEN) and Buford (BUF). We consider Atlanta as the central node, and there are $0,1,2$ intermediate nodes between Atlanta node and the three leaf nodes ``ALP", ``KEN" and ``BUF", respectively. The time interval need for traveling through each link is labeled beside the corresponding link.}
    \label{fig:atl_example}
\end{figure}

We next consider the network shown in Fig. \ref{fig:atl_example} that appears also in \cite{ACC2021}.  
This network is inspried by a recent report by INRIX~\cite{inrix} which suggests that a UAM local network traveling to the city of Atlanta from three exurbs, Alpharetta, Kennesaw and Buford, has the potential to offer significant time savings compared to ground transportation during peak travel times. We further assume there is an intermediate vertistop $a$ at a suburb between Kennesaw and Atlanta, and intermediate vertistops $b$ and $c$ at suburbs between Buford and Atlanta, as shown in the figure. The corresponding travel time intervals are labeled beside the links.
We take Atlanta as node number $0$ and the exurbs Alpharetta, Kennesaw and Buford $1$, $2$ and $3$, respectively. We let ${\mc R} = \{R^1, R^2, R^3\}$, where $ R^1 =\{(v_1,v_0)\}$, $ R^2 =\{(v_2,a), (a,v_0)\}$, and $\mc R^3 =\{(v_3,c),(c,b),(b,v_0)\}$.

We now consider demands $\mathcal{D}=\{(R_j,f_j)\}_{j\in {\mc J}}$ defined subsequently, where $R_j \in \{R^1,R^2,R^3\}$. Note that the destination for all demands is Atlanta, node $v_0$. We consider a time horizon of three hours ($T = 180$ minutes) and assume there are two landing spots in Atlanta, but only one landing spot at vertistops $a,b$ and $c$, i.e., $C_{v_0} = 2$, $C_{a}=C_{b}=C_{c}=1$. Each UAV stays at the intermediate vertistops along its path for $w_I= 1$ minute and at the Atlanta vertiport for $w=5$ minutes after landing.

We assume the number of UAV demands across the three origins is $[h_1,h_2,h_3] = [4,4,19]$ and that 
arrival deadlines are set at regular intervals, i.e., if origin $v_i$ is tasked with sending $h_i$ UAVs to Atlanta, the deadlines are ${\left\lfloor \frac{180}{1+h_i}\cdot k + 0.5 \right\rfloor}$, $k = 1,2,\ldots, h_i$. 

For comparison, the mixed-integer optimization problem in \cite{ACC2021} is solved in MATLAB using Gurobi with YALMIP toolbox to obtain a schedule that exactly minimizes the SoD cost \eqref{eq:sod}. Using the exact mixed-integer program from \cite{ACC2021} takes around $150$ minutes of computation time on a standard laptop, and the optimal SoD cost as in \eqref{eq:sod} is 1587 minutes. Note that the lower bound computed using worst cast travel time but ignoring capacity constraints is 1173 minutes. Thus, the capacity constraints at the nodes play a significant role in overall network SoD cost.

In contrast, the algorithm proposed in Section \ref{sec:algorithm} obtains a feasible schedule in 0.05 seconds. The SoD cost of this sub-optimal schedule is 1669 minutes. The SoD cost reduces to 1605 minutes when the algorithm finds an alternative schedule after 5.8 seconds of computation time. 
Fig. \ref{fig:sum_diff} demonstrates how the minimal SoD cost of all stored schedules decreases with computation time. 
\begin{figure}[!t]
\centering
\input{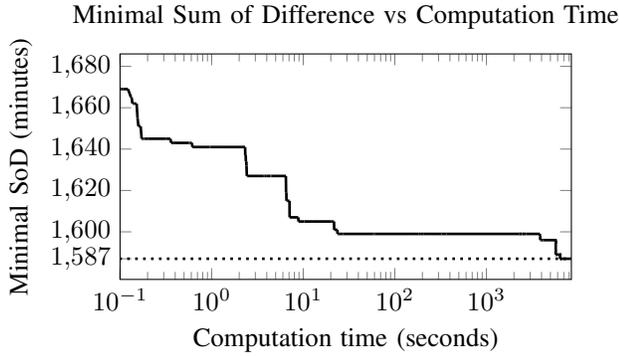}
\caption{The computation time versus minimal SoD cost among all stored schedules obtained within the corresponding computation time for Case Study 2. The dashed line represents the optimal SoD computed from the optimal schedule obtained by the mixed-integer program. }
\label{fig:sum_diff}
\end{figure}

The exact mixed-integer program cannot practically compute a solution if the number of demands exceeds about 50. On the other hand, the scheduling algorithm in this paper provides a sub-optimal schedule quickly for even a large demand set. To demonstrate this, we now consider 200 demands assigned randomly to the three routes with random deadlines in the interval $[40, 1540]$ minutes.
The algorithm finds the first feasible schedule at 0.2 seconds with Sum of Difference equal to 12662 minutes and, after 10.1 seconds, obtains a schedule with Sum of Difference equal to 12637 minutes. The lower bound for this set of demands is 7786 minutes. 
We note, however, that this is only a lower bound and it is likely that the optimal schedule has SoD cost significantly greater than this bound as was the case in the prior example with 27 UAVs.

\section{Conclusion}
In this paper, we studied the problem of dynamic scheduling in UAM networks with uncertain travel time. One main challenge is that nodes in a UAM network have limited parking spaces. As a result, a schedule for each UAV in the network has to be made before it takes off to ensure that a parking space is available upon arrival. 
Additionally, a mixed integer program is too time-expensive when the complexity of network and the size of demands increase. 

An exact schedule can sometimes be obtained from a mixed integer program, but this is not computationally tractable for larger networks and/or large sets of demands. Instead, we present a dynamic scheduling algorithm that is able to consider new demands over time and uses branch-and-bound heuristics to identify feasible but possibly sub-optimal schedules. In addition, we presented theoretical results establishing necessary conditions for a schedule to be feasible, and we further showed that these conditions become also sufficient conditions in certain cases.

Future work could consider scheduling for unforeseen disruptions such as one or more UAVs needing to reroute or land due to, e.g., adverse weather conditions.

\bibliography{Library} 
\bibliographystyle{IEEEtran}

\appendix

We  explain Algorithm \ref{alg:schedule_1} and \ref{alg:order_find} in this appendix. While Algorithm \ref{alg:order_find} is the main schedule-finding algorithm that provides a set of possible schedules, Algorithm \ref{alg:schedule_1} prepares the variables for Algorithm \ref{alg:order_find}, picks the best solution returned from Algorithm \ref{alg:order_find}, and transforms that into a schedule.

Algorithm \ref{alg:schedule_1} first computes $f_{j,v}$ by \eqref{eq:f_j_v} for all $j\in \mc J$ and $v\in V(R_j)$, based on the set of unassigned demands $\mc D_0 = \{D_j\}_{j\in \mc J}$. Recall that $f_{j,v}$ is the latest time for the UAV to arrive at node $v$ if $v \neq 0^{R_j}$ and is the latest departure time if $v = 0^{R_j}$ without passing its deadline $f_j$.
We initialize $DDL$, $PT$ and $RT$ as zero matrices of dimension $|\mc J|\times |\mc V|$.
We let $DDL(j,v) = f_{j,v}$, $PT(j,v) = m_v^{R_j}$ and $RT(j,v) = \ul{\mc M}^j_v$ for all $j$ and $v\in V(R_j)\backslash \{0^{R_j}\}$, and let $DDL(j,0^{R_j}) = f_{j,0^{R_j}}$, so that $DDL$, $PT$ ,and $RT$ represent the deadline for arrival/departure, possible time for occupying the nodes, and the shortest time for the UAV to travel from origin to node $v$ along its route, respectively.
It then calls the function $schedule$ in Algorithm \ref{alg:order_find} and picks the schedule that achieves the smallest SoD cost.

\begin{figure}[t]
 \removelatexerror
\begin{algorithm}[H]
\caption{Prepare Schedule}
\label{alg:schedule_1}
\begin{algorithmic}[1]
\Function{prepare\_schedule}{$\mc D_0 = \{D_j\}_{j\in \J}$}
\State Let $DDL$, $PT$, $RT$ be three $N_0$ by $\abs{\mc V}$ zero matrices.
\For{$j \in \mc J$}
\For{$v \in R_j $}
\State $DDL(j,v) := f_{j,v}$\hfill \Comment{$f_{j,v}$ from~\eqref{eq:f_j_v}} 
\State $PT(j,v) := m^{R_j}_{v}$ \Comment{$m^{R_j}_v$ from \eqref{eq:m_span}}
\State $RT(j,v) := \ul{\mc M}^j_v$ \Comment{$\ul{\mc M}^j_v$ from \eqref{eq:M_span}}
\EndFor
\EndFor
\State $RID :=  zeros(N_0,1)$, $RdpT := zeros(N_0,1)$,  
\Statex $SID := \emptyset$, $SdpT := \emptyset$ \hfill \Comment{Initialize}
\State $(SID, SdpT) :=$ \Call{schedule}{$\mc D_0, SID, \newline\hspace*{\fill}  SdpT, RID, RdpT, \abs{\mc D_0}, DDL, PT, RT$}
\State $\S_{new} := \emptyset $
\If{$SdpT \neq \emptyset$}
\State $i^* := \argmax_i \sum_j SdpT(j, i)$ \Comment{$i^*$ index of the optimal schedule}
\State Let $\mc S_{new}$ be the new schedule, with departure times $SdpT(:,i^*)$ ordered as $SID(:,i^*)$

\EndIf
\State \Return $\S_{new}$    
\EndFunction
\end{algorithmic}
\end{algorithm}
\vspace{-1cm}
\end{figure}

Algorithm \ref{alg:order_find} is a branch-and-bound algorithm that considers the set of candidate schedules as a rooted tree. 
Given the unassigned demands with ascending deadlines $\mc D_0 = \{D_j = (R_j,f_j)\}_{j \in \J_0}$, the algorithm visits the unassigned demands in descending order according to $f_j$ to pick the last journey in the schedule. $RdpT$  records the departure time while $RID$  records the index of the picked demand as in line \ref{alg_line:start}.

We then remove the demand from the unassigned list and delete the row of the corresponding demand from $DDL$, $PT$ and $RT$ before continuing with the current branch.
If the set of demands $\mc D_1 = \{D_j\}_{j \in \J_1}$, where $\mc J_1 \subseteq \mc J_0$, is already scheduled along the current branch, then for each node $v$, we can then compute $f^{node}_{v,c}$ as in Algorithm~\ref{alg:schedule} line~\ref{alg_line:c_start}--\ref{alg_line:c_end} for all $v\in \mc V$ and $c=1,\dots,C_v$ by substituting $\mc J_a = \mc J_1$ with line~\ref{alg_line:c_start}.
We implicitly assume that we will assign the journey to a parking spot where the earliest arrival time of the next journey at the same spot will be the latest among the $C_v$ parking spots for all assigned demands. We let $f^{node}_{v} = \max_{1\leq c \leq C_v} f^{node}_{v,c}$ for all $v \in \mc V\backslash S$ and let
$f'_{j,v} = \min (DDL(j,v), f^{node}_{v})$ for all $j \in \J_0 \backslash \J_1$ and all $v\in V(R_j)$. We then update $DDL$ so that
\begin{equation}
\label{eq:DDL_update1}
    DDL(j,0^{R_j})=\min_{v\in V(R_j)\backslash\{0\}}(f'_{j,v}-\ol{\mc M}^j_v)
\end{equation}
and 
\begin{equation}
\label{eq:DDL_update2}
    DDL(j,v) = DDL(j,0^{R_j})+\ol{\mc M}^j_v \, .
\end{equation}

The search-and-assign process is repeated until all demands are assigned or the branch is stopped by the pruning mechanism, which will be described later. If all demands are assigned, then 
the current branch, $RID$ and $RdpT$, will be stored into $SID$ and $SdpT$ as in line \ref{alg_line:branch_store}. The algorithm will then continue with another branch until all branches are considered (visited or pruned). The algorithm then returns $SID$ and $SdpT$. 

As mentioned in Section~\ref{sec:algorithm}, we have five main pruning conditions, while the first, second and the fourth conditions are easy to be realized by line~\ref{alg_line:cond_1}, \ref{alg_line:cond_2} and \ref{alg_line:cond_4}, respectively. We next explain the third and the fifth conditions in detail.

The third condition indicates that if the demand with latest deadline can be scheduled without interfering with any other demand, then we schedule it as the last journey to arrive at the destination. The following explains  line~\ref{alg_line:islast} in Algorithm \ref{alg:order_find}. 
Let ${j_0} = \argmax_{j \in \J_0\backslash\J_1} f_j$. For each $v\in V(R_{j_0})$, the number of unassigned demands that need to be considered is  
    \begin{equation}
    \label{eq:num_consider}
        num = \min\Big(C_v, \abs{\{D_j\}_{j \in\J_0\backslash\J_1 |v \in V(R_{j_0})}}-1\Big) \, .
    \end{equation}
By our parking-spot assigning assumption, if assigning $D_{j_0}$ as the last journey among all the unassigned demands will not interfere with any other demand, then 
    \begin{equation}
        \label{eq:is_last}
        \text{max}^k([f^{node}_{v,1},\dots,f^{node}_{v,C_v}]) \geq \text{max}^{k+1} (DDL(:,v))
    \end{equation}
for all $v\in V(R_{j_0})$ and $k = 1,\dots, num$, where we let $\text{max}^k(\cdot)$ represents the $k$'th largest number in the vector $(\cdot)$.

\begin{figure}[t!]
 \removelatexerror
\begin{algorithm}[H]
\caption{Schedule}
\label{alg:order_find}
\begin{algorithmic}[1]
\Function{schedule}{$\{D_{in,j}\}_{j\in \mc J}, SID,   SdpT,RID, \newline\hspace*{\fill} RdpT, N_0, DDL, PT, RT$}
\State  $\mc D_{in} = \{D_{in,j} = (R_{in,j}, f_{in,j})\}_{j\in \mc J}$
\State $N := \abs{\mc D_{in}}$
\State $\mc D = \{D_j = (R_j, f_j)\}_{j=1}^N :=$ the sorted sequence of $\mc D_{in}$, so that $\mc D = \mc D_{in}$ and $f_{j_1} \leq f_{j_2}$ if $j_1 \leq j_2$
\State $K := zeros(N,1)$
\State $K(j) := j'$ if and only if $D_j = D_{in,j'}$
\If{ $\exists j, v$ that $DDL(j,v)<0$} \label{alg_line:cond_1}
\State \Return $SID, SdpT$
\ElsIf{$\exists v$ that $\sum_{j} PT(j,v)<(\max_{j} DDL(j,v)- \min_{j} RT(j,v))$} \label{alg_line:cond_2}
\State \Return $SID, SdpT$
\ElsIf{$N==1$}\label{alg_line:branch_finish}
\State $dpT := DDL(N,0^{R_j})$ 
\State $RID(1) := K(1)$, $RdpT(1) := dpT$
\State $SID := [SID,RID]$, $SdpT := [SdpT,RdpT]$ \label{alg_line:branch_store}
\State \Return $SID, SdpT$
\Else
\If{$D_N$ can arrive at the each node at last without requiring any other UAV to depart earlier}  \hfill \Comment{\eqref{eq:is_last}} \label{alg_line:islast}
\State $i:=N$
\State \label{alg_line:start} $RID(N) := K(i)$,  $RdpT(N) :=  DDL(i,0^{R_i})$
\State $\mc D_2 := \{D_j\}_{j\neq i}$
\State $DDL_2 := DDL$,  $DDL_2(i,:):= [\,]$ 
\State $PT_2 := PT$, $PT_2(i,:) := [\,]$
\State $RT_2 := RT$,  $RT_2(i,:) := [\,]$
\State update $DDL_2$ according to \eqref{eq:DDL_update1} and \eqref{eq:DDL_update2}
\State compare $SID, SdpT$ with $\mc D_2$  \label{alg_line:compare}
\If{condition in~\eqref{eq:comp_stored} is satisfied} \label{alg_line:compare_result}
\State \label{alg_line:merge1} $RID(1:N-1) := SID_2$    
\State $RdpT(1:N-1) := SdpT_2$
\State $SID := [SID,RID]$
\State $SdpT := [SdpT,RdpT]$ \label{alg_line:merge2}
\Else
\State $[SID,SdpT] :=$ \Call{schedule}{$\mc D_{2}, SID, \newline\hspace*{\fill}  SdpT, RID,RdpT,  N_0, DDL_2, PT_2, RT_2$}
\EndIf \label{alg_line:end}
\Else
\For{$i \in \{N,\ldots, 1\}$}
\If{$\forall j$ such that $R_j = R_i$, $f_j \leq f_i$} \label{alg_line:cond_4}
\State same as line \ref{alg_line:start} -- \ref{alg_line:end}
\EndIf
\EndFor
\EndIf
\EndIf
\State \Return $SID,SdpT$
\EndFunction
\end{algorithmic}
\end{algorithm}
\vspace{-1cm}
\end{figure}

We now demonstrate how we can realize the last pruning condition with lines~\ref{alg_line:compare} and \ref{alg_line:compare_result}.
When searching along a branch, we will compare the current scheduled demands with stored assignments in $SID$ and $SdpT$. If the expected SoD cost along the current branch already exceeds any of the stored branches, the current branch will be abandoned. Consider the current unassigned demands $\{D_j\}_{j \in \J_0\backslash\J_1}$. If $SID(1:\abs{\J_0\backslash\J_1},col) = \J_0\backslash\J_1$ for some $col \in \mathbb N$, we then compute $f^{node}_{v,c}$ for all $v \in V(R_j)$ and $c=1,\dots,C_v$. 
Similarly, we can use $SID(:,col)$ and $SdpT(:,col)$ to obtain $\{\delta_j\}_{j \in \J_1}$ and compute $f^{node,S}_{v,c}$ for all $v \in V(R_j)$ and $c=1,\dots,C_v$. 
If
   \begin{equation}
   \label{eq:comp_stored}
       \text{max}^k([f^{node}_{v,1},\dots,f^{node}_{v,C_v}]) = \text{max}^k([f^{node,S}_{v,1},\dots,f^{node,S}_{v,C_v}])
   \end{equation}
for all $v \in V(R_j)$ and $k=1,\dots,num$, where $num$ can be computed as in \eqref{eq:num_consider}, is satisfied, then we can stop searching the current branch and instead use the stored schedule, as initiated on line~\ref{alg_line:compare_result}.

If more than one $col$ schedule satisfies \eqref{eq:comp_stored}, we pick the one with the least SoD cost, $col_{min}$. We let $SID_2 = SID(1:\abs{\J_0\backslash\J_1}, col_{min})$ and $SdpT_2 = SdpT(1:\abs{\J_0\backslash\J_1},col_{min})$. We then merge $SID_2$/$SdpT_2$ with $RID$/$RdpT$ as in line \ref{alg_line:merge1}--\ref{alg_line:merge2} and stop searching this branch.

\end{document}